\DeclareSymbolFont{AMSb}{U}{msb}{m}{n}
\newcolumntype{L}{>{$}l<{$}} 
\newcolumntype{C}{>{$}c<{$}}
\newtheorem{corollary}{Corollary}[section]
\newtheorem*{corollary*}{Corollary}
\newtheorem{lemma}[corollary]{Lemma}
\newtheorem*{lemma*}{Lemma}
\newtheorem{theorem}[corollary]{Theorem}
\newtheorem*{theorem*}{Theorem}
\newtheorem*{conjecture*}{Conjecture}
\newtheorem{proposition}[corollary]{Proposition}
\newtheorem*{proposition*}{Proposition}
\theoremstyle{definition}
\newtheorem{definition}[corollary]{Definition}
\newtheorem{remark}[corollary]{Remark}
\newcommand{\bC}{\mathbb{C}}
\newcommand{\bP}{\mathbb{P}}
\newcommand{\bT}{\mathbb{T}}
\newcommand{\bZ}{\mathbb{Z}}
\newcommand{\cH}{\mathcal{H}}
\newcommand{\cN}{\mathcal{N}}
\newcommand{\cO}{\mathcal{O}}
\newcommand{\cQ}{\mathcal{Q}}
\newcommand{\cT}{\mathcal{T}}
\newcommand{\cV}{\mathcal{V}}
\newcommand{\sF}{\mathscr{F}}
\newcommand{\sO}{\mathscr{O}}
\newcommand{\res}{\operatorname{res}}
\newcommand{\im}{\operatorname{Im}}
\newcommand{\hol}{\operatorname{hol}}
\newcommand{\dd}{d}
\newcommand{\st}{\mbox{ s.t. }}
\newcommand{\supp}[1]{|#1|}
\title{Hyperelliptic odd coverings}
\author{Riccardo Moschetti}
\address[R.M.]{ Dipartimento di Matematica ``F. Casorati'', Universit\`{a} degli Studi di Pavia, Via Ferrata 5, 27100 Pavia, Italy}
\email{riccardo.moschetti@unipv.it}
\author{Gian Pietro Pirola}
\address[G.P.P]{ Dipartimento di Matematica ``F. Casorati'', Universit\`{a} degli Studi di Pavia, Via Ferrata 5, 27100 Pavia, Italy}
\email{gianpietro.pirola@unipv.it}
\begin{document}

\begin{abstract}
We investigate a class of odd (ramification) coverings $C \to \bP^1$ where $C$ is hyperelliptic, its Weierstrass points maps to one fixed point of $\bP^1$ and the covering map makes the hyperelliptic involution of $C$ commute with an involution of $\bP^1$. We show that the total number of hyperelliptic odd coverings of minimal degree $4g$ is ${3g \choose g-1} 2^{2g}$ when $C$ is general.
Our study is approached from three main perspectives: if a fixed effective theta characteristic is fixed they are described as a solution of a certain class of differential equations; then they are studied from the monodromy viewpoint and a deformation argument that leads to the final computation.
\end{abstract}
\maketitle

\section{Introduction}
Let $C$ be a compact Riemann surface. We say that a covering $H: C \to \bP^1$  has odd ramification if all the ramification points of $H$ have odd degree, i.e. $z \mapsto z^{2n+1}$ in suitable local coordinates. The structure and the importance of odd ramification coverings have been highlighted in the seminal works of Serre \cite{MR1076476, MR1078120} and Fried \cite{MR2735035}. 
In particular, the monodromy group of any odd ramification covering is a subgroup of the alternating group $A_d$, where $d$ is the degree of  $H$. Moreover, the covering comes with an associated spin structure i.e. a theta characteristic of $C$, that is a line bundle such that $L^2=\omega_C,$ where $\omega_C$ is the canonical bundle of $C$. 
The standard construction of odd ramification coverings comes from the Riemann existence theorem. The spaces that such coverings define go under the name of Hurwitz spaces and have been studied for a long time, starting from \cite{MR1511135, MR1509816}. We refer to \cite{MR260752} for a modern introduction.
Hurwitz spaces parametrizing odd ramification coverings have been studied by Fried et. al. \cite{MR2735035, MR1935406, Fried_connectednessof, MR667463}, where several relations with theta characteristics and modular towers are stated. 

Finding odd coverings where the source curve $C$  is \emph{fixed}  is much more involved and a Brill-Noether theory for coverings with special monodromy is not available. 
However, by using degeneration on Hurwitz spaces, the existence of coverings with degree $d \geq 2g-1$ and alternating monodromy group has been proved in \cite{MR2063042} for general curves in the sense of moduli. 
The degeneration is a powerful method and allowed in \cite{FMNP} to compute the number of odd ramification coverings, called the \emph{alternating Catalan numbers}, in the minimal degree case $d=2g-1$. Many other cases have been covered in \cite{Lian} with other techniques.

A completely different approach, also used in \cite{FMNP} for an alternative proof of the first step of the induction, appeared firstly in \cite{MR2093052}. It consists in the interpretation of an odd ramification covering $C \to \bP^1$ as a solution of a certain differential equation, see \cite[Section 4]{FMNP}.
A covering corresponds to a solution involving the meromorphic differential associated with a De-Rham problem. This technique is not so easy to carry on in general, but it becomes more accessible when symmetries are present. 

In this paper, we study a special class of odd coverings that we call hyperelliptic odd coverings.
Let $C \to \bP^1$ be a covering with $C$ a curve of genus $g \geq 1$ with an involution map $\sigma: C \to C$ such that the quotient is $\bP^1$. Denote by $P_1, \ldots, P_{2g+2}$ the fixed points of $\sigma$. When $g>1$ the curve $C$ is hyperelliptic and the points $P_i$ are the Weierstrass points of $C$. By an abuse of language, an elliptic curve $E=C$ is for us hyperelliptic. 
Consider also an involution $\iota: \bP^1 \to \bP^1$, and let $Z_\infty, Z_0$ be two fixed points of $\iota$.

\begin{definition}
The covering $H:C \to \bP^1$ is called \emph{hyperelliptic odd covering} if the following two conditions hold:
\begin{itemize}
    \item The map $H$ makes $\sigma$ and $\iota$ commute, i.e. $H(\sigma(P))= \iota (H(P)) \ \ \forall P\in C$.
    \item The fibre $H^{-1}(Z_\infty)$ is $\{P_1, \ldots, P_{2g+2}\}$ set-theoretically.
\end{itemize}
\end{definition}

We fix a model of the projective line $\bP^1=\bC \cup \infty$ and we assume $\iota$ is the involution induced by the multiplication by $-1$ on $\bC$, $Z_\infty=\infty$ and $Z_0=0$. In this setting, we can represent the map $H$ with a meromorphic function $h$ with poles only at the points $P_i$ and odd multiplicity. The function $h$ is defined up to a non-zero multiplicative constant. 
Since $g(C)\geq 1$, the map $H$ has at least three branch points.  A non-canonical way to fix $h$ is to fix a ramification point $R_1 \in C\setminus \{H^{-1}(0), H^{-1}(\infty)\}$ and assume $h(R_1)=1$.

Let $W=P_1 + \ldots + P_{2g+2}$ be the Weierstrass divisor of $C$. We choose an effective theta characteristic $L$ on $C$, and a section $s$ giving the isomorphism $L \cong \cO(F)$ where $F:=\sum_{P_i\in W}n_i P_i,$ $n_i\geq 0$ and $\deg F=\sum_{i=1}^{2g+2} n_i=g-1$. The differential equation governing the existence of hyperelliptic odd coverings $C \to \bP^1$ with theta characteristic $\cO(F)$ is the following.

\begin{equation} \label{main_equation}
\dd h =f^2\omega, \tag{$\star$}
\end{equation}

\noindent where $\omega=s^2 \in H^0(C, \omega_C)$ is a non-trivial holomorphic form which vanishes twice on $F$, i.e. $(\omega)=2F$. The choice of $F$ will play a very important role in the following.

Our first theorem is an existence result. Let $H^0(C,\sO_C(2F+W))$ be the space of meromorphic functions having poles at $2F+W$. Note that $\deg (2F+W)=4g.$ Let $V=H^0(C,\sO_C(2F+W))^{-}$ its anti-invariant part with respect to the involution $\sigma$ of $C$. 

\begin{theorem} \label{thm:Wsolutions}
The equation \eqref{main_equation} has solutions for suitable $f\in V$, $f\neq 0$. A non-zero solution $h \in V$ of \eqref{main_equation} defines a hyperelliptic odd covering.
\end{theorem}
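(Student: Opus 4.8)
The plan is to rephrase the solvability of \eqref{main_equation} as the vanishing of a de Rham class and to use the hyperelliptic symmetry to kill the residue obstruction. Observe first that, since $\iota$ is multiplication by $-1$, the required commutation $H\circ\sigma=\iota\circ H$ is literally the condition $\sigma^*h=-h$; a covering thus corresponds to an anti-invariant function, which is the reason one searches for $h\in V$. For any $f\in V$ the differential $f^2\omega$ is again anti-invariant, because $\omega=s^2$ is holomorphic — hence anti-invariant on a hyperelliptic curve — while $f^2$ is invariant; moreover its only poles lie at the Weierstrass points, the fixed points of $\sigma$. In a local coordinate $t$ with $\sigma^*t=-t$, an anti-invariant differential has the form $g(t)\,dt$ with $g$ even, so its Laurent expansion carries no $t^{-1}$ term and its residue vanishes. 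Hence every residue of $f^2\omega$ is automatically zero, $f^2\omega$ is a differential of the second kind, and $d h=f^2\omega$ has a single-valued meromorphic primitive precisely when all periods of $f^2\omega$ over $H_1(C,\bZ)$ vanish, i.e. when its class in $H^1_{\mathrm{dR}}(C)$ is zero.

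Next I would run the dimension count. Riemann--Roch gives $h^0(\sO_C(2F+W))=3g+1$, since $\deg(2F+W)=4g>2g-2$. The $\sigma$-invariant part descends to $C/\sigma=\bP^1$; an invariant function has even order at each $P_i$, so its effective polar bound is $2F$, which pushes down to a divisor of degree $\deg F=g-1$ on $\bP^1$ and contributes $g$. Therefore $\dim V=2g+1$. The assignment $f\mapsto[f^2\omega]$ is then a \emph{quadratic} map $Q\colon V\to H^1_{\mathrm{dR}}(C)$ from a space of dimension $2g+1$ to one of dimension $2g$; the target is not cut down by anti-invariance, since $\sigma^*=-1$ on the whole of $H^1_{\mathrm{dR}}(C)$. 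Projectivizing, $Q=0$ defines $2g$ quadric hypersurfaces in $\bP(V)=\bP^{2g}$, whose intersection is nonempty by the projective dimension theorem. Any point of it is a nonzero $f\in V$ with $f^2\omega$ exact; fixing the constant of integration so that $\sigma^*h=-h$, and lowering the bound $(f^2\omega)\ge-(2F+2W)$ by one order (no logarithms arise, the residues being zero) places the primitive $h$ in $V$.

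For the second assertion I would verify the two axioms directly. Anti-invariance of $h$ gives $H\circ\sigma=\iota\circ H$ at once. Odd ramification — the structurally decisive point — falls out of the squared shape of \eqref{main_equation}: both $f^2$ and $\omega$ have even divisors, so $dh$ vanishes to even order at every point, and hence at each ramification point $R$ away from $0,\infty$ the multiplicity $\ord_R\!\big(h-h(R)\big)=\ord_R(dh)+1$ is odd; at a pole the ramification equals the pole order of the anti-invariant $h$, which is odd, and over $0$ one has $\ord_R h=\ord_R(dh)+1$, again odd. Since the poles of $h\in V$ lie only on $W$, this already yields $H^{-1}(\infty)\subseteq\{P_1,\dots,P_{2g+2}\}$.

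The point I expect to demand real work — and the main obstacle — is the reverse inclusion $H^{-1}(\infty)=W$. An anti-invariant $h$ that is regular at some $P_j$ must satisfy $h(P_j)=0$, so that $P_j$ lands in $H^{-1}(0)$ rather than over $\infty$; forcing a genuine pole at every Weierstrass point is the requirement $\ord_{P_i}f\le-n_i-1$ for all $i$, an open condition that simultaneously pins the degree to its maximum $4g$. The substance of the existence statement is therefore not merely that the $2g$ quadrics meet, but that their intersection meets this open pole locus instead of hiding in the subspaces where $f$ drops a pole. Securing that is where I anticipate the genuine difficulty; by contrast the residue vanishing and the odd-ramification check are formal consequences of the symmetry and of the squaring in \eqref{main_equation}.
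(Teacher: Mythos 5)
Your argument is, up to the order of presentation, exactly the paper's proof: the same translation of solvability into vanishing of the de Rham class of $f^2\omega$, the same local computation showing that anti-invariant differentials have no residues at the fixed points of $\sigma$, the same dimension count ($\dim V=2g+1$ against $\dim H^1(C,\bC)=2g$, giving $2g$ quadrics in $\bP^{2g}$ with nonempty intersection), the same normalization of the integration constant to make $h$ anti-invariant, and the same observation that $dh$ has even zero divisor, hence odd ramification. If anything, your justification of the even vanishing (both $f^2$ and $\omega=s^2$ have even divisors) is the correct one; the paper's phrase ``since it is an anti-invariant form'' only accounts for the fixed points of $\sigma$.

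Concerning the point you single out as the main obstacle --- the reverse inclusion $H^{-1}(\infty)=W$, i.e.\ that every nonzero solution $h$ has a pole at \emph{every} Weierstrass point: be aware that the paper's proof does not address this at all; it ends with the odd-ramification remark and the dimension count, so you have not missed an argument that is present there. Moreover, your suspicion that this is a genuine issue is correct, and in fact the inclusion can fail for special curves. Take $E$ the elliptic curve with $g_2=0$, with $F=0$, $\omega=dz$, and $P_4$ the origin. Since $\wp''=6\wp^2$, the functions $h=1/\wp'$ and $f=\sqrt{-6}\,\wp/\wp'$ are odd, lie in $V$ (their poles are simple and supported on the three nonzero $2$-torsion points $P_1,P_2,P_3$), and satisfy $dh=-\wp''(\wp')^{-2}\,dz=f^2\,dz$; yet $\deg h=3$ and $P_4\notin h^{-1}(\infty)$, so this nonzero solution of \eqref{main_equation} is not a hyperelliptic odd covering in the sense of the definition. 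Consequently the second sentence of Theorem \ref{thm:Wsolutions}, read for arbitrary $C$ and $F$, needs an extra hypothesis: either a non-degeneracy condition on the solution (all residues $\res_{P_i}(f\omega)$ nonzero, equivalently maximal pole orders), or genericity of $C$ in $H_g$. In the generic case the gap can be closed by the same dimension count as in Proposition \ref{prop:minimaldegree}: a solution whose polar set is a proper subset of $W$ gives a covering with at most $2g-1$ pairs of branch points away from $0,\infty$, and such coverings sweep out a locus of dimension at most $2g-2<\dim H_g$, so they do not occur for general $C$. This is the statement actually used in the rest of the paper, so the defect is harmless downstream, but your identification of where the real work lies was accurate.
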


We show that hyperelliptic odd coverings can be described in the projective space $\bP(V)$ by a locus $\Theta$ which is an intersection of $2g$ quadrics.
\medskip
 
Section \ref{sec:hurwsc} is devoted to studying the ramification data of these solutions from the point of view of monodromy. 
We first show in Proposition \ref{prop:minimaldegree} that when $C$ is general in the moduli space $H_g$ of hyperelliptic curves of genus $g$, the minimal degree of a hyperelliptic odd covering $H: C \to \bP^1$ is $4g$. Then, we give in Propositions \ref{prop:algebraicresult} and \ref{prop:topological} necessary and sufficient conditions on the monodromy data of a covering to be a hyperelliptic odd covering, i.e. to correspond to a solution of \eqref{main_equation}. Surprisingly, this implies that the choice of $F$ as before is equivalent to a certain monodromy data. 

This motivates the study of the Hurwitz space $\cH^{\text{HOC}}_g$ of hyperelliptic odd coverings of degree $4g$ modulo automorphisms of $\bP^1$. Let
$$\Phi: \cH^{\text{HOC}}_g \to H_g$$
be the forgetful map. Once we fix $C \in H_g$ general, $\Phi^{-1}(C)=\sqcup_F \cH_{C}(F)$, where $\cH_{C}(F)$ parametrizes the solutions of \eqref{main_equation} where $C$ and $F$ have been fixed. We prove in Corollary \ref{cor:finitelymany} a result which complements Theorem \ref{thm:Wsolutions} showing that \eqref{main_equation} has finitely many solutions.

In the same spirit of \cite{FMNP} we compute the number of such solutions. This is possible thanks again to Proposition \ref{prop:topological}, which guarantees that the forgetful map
$\cH^{\text{HOC}}_g \to H_g$ is dominant. If we fix a general $C$ in $H_g$, we can use deformation theory to show that the point of the fibre over $C$ are rigid, hence we can compute the number of elements of this fibre by using the fact that the points in $\Theta$ are smooth.

\begin{theorem} \label{thm:number}
The number of hyperelliptic odd ramification covering in $\cH_{C}(F)$ is $2^{2g}$ for every choice of $F$ on $C$. In total we have
$$ \deg \Phi = {3g \choose g-1} 2^{2g}.$$
\end{theorem}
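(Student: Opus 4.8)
The plan is to first establish the per-fibre count $|\cH_{C}(F)|=2^{2g}$ for an arbitrary admissible $F$, and then to obtain $\deg\Phi$ by summing over the finitely many choices of $F$. The starting point is the identification, set up around Theorem~\ref{thm:Wsolutions}, of $\cH_{C}(F)$ with the locus $\Theta\subset\bP(V)$: a nonzero $f\in V$ for which $f^2\omega$ is exact with primitive $h\in V$ produces a hyperelliptic odd covering, and the scaling ambiguity of $h$ is precisely the projective identification, so that points of $\Theta$ correspond bijectively to elements of $\cH_{C}(F)$. I would first fix the ambient dimension. Since $\deg(2F+W)=4g>2g-2$, Riemann--Roch gives $h^0(C,\sO_C(2F+W))=3g+1$; decomposing into $\sigma$-eigenspaces, the invariant part consists of functions pulled back from $\bP^1=C/\sigma$ and has dimension $g$, so $\dim V=2g+1$ and $\bP(V)\cong\bP^{2g}$.

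Next I would run a B\'ezout count. By the description stated after Theorem~\ref{thm:Wsolutions}, $\Theta$ is the intersection of $2g$ quadrics in $\bP^{2g}$, and by Corollary~\ref{cor:finitelymany} it is finite, hence zero-dimensional. A zero-dimensional intersection of $2g$ hypersurfaces in $\bP^{2g}$ is necessarily proper, so $\Theta$ is a complete intersection and its length is the product of the degrees, that is $2^{2g}$.

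The decisive point is that $\Theta$ is \emph{reduced} when $C$ is general, so that its length equals its number of points; this is the deformation-theoretic heart of the argument, and I expect it to be the main obstacle. By Proposition~\ref{prop:topological} the forgetful map $\Phi$ is dominant, and by Corollary~\ref{cor:finitelymany} its fibres are finite, so $\Phi$ is generically finite; in characteristic zero it is then generically \'etale, whence for general $C\in H_g$ the whole fibre $\Phi^{-1}(C)$, and in particular each summand $\cH_{C}(F)\cong\Theta$, is reduced. Equivalently, one shows that each covering in the fibre is rigid, i.e.\ that the Zariski tangent space to $\Theta$ vanishes at every point, which is the same as transversality of the $2g$ quadrics; this is exactly where the genericity of $C$ and the matching of the scheme structure on $\Theta$ with the deformation space of the covering must be used. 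Granting reducedness, every intersection point is simple and $|\cH_{C}(F)|=|\Theta|=2^{2g}$, independently of $F$.

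It remains to count the admissible $F$ and to assemble the total. The divisors $F=\sum_{i=1}^{2g+2}n_iP_i$ with $n_i\geq 0$ and $\sum_i n_i=g-1$ number $\binom{(g-1)+(2g+2)-1}{(2g+2)-1}=\binom{3g}{2g+1}=\binom{3g}{g-1}$ by stars and bars. Because fixing $F$ is equivalent to fixing the monodromy data (Propositions~\ref{prop:algebraicresult} and~\ref{prop:topological}), the decomposition $\Phi^{-1}(C)=\sqcup_F\cH_{C}(F)$ is a genuine disjoint union, and therefore
\[
\deg\Phi=\sum_F|\cH_{C}(F)|=\binom{3g}{g-1}\,2^{2g},
\]
as claimed.
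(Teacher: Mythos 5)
Your skeleton matches the paper's own proof: identify $\cH_{C}(F)$ with $\Theta\subset\bP(V)=\bP^{2g}$, note that $\Theta$ is a zero-dimensional intersection of $2g$ quadrics so that B\'ezout gives length $2^{2g}$, reduce everything to reducedness/transversality, and finish with the stars-and-bars count $\binom{3g}{g-1}$. The problem is that you never prove the one step you yourself call ``the deformation-theoretic heart'': you write ``Granting reducedness\dots'' and offer only the generic-\'etaleness argument as a substitute. That substitute does not close the gap. Generic \'etaleness of $\Phi$ (which the paper does establish, in the guise of generic injectivity of $\dd \Phi$ in Corollary \ref{cor:finitelymany}) gives reducedness of the scheme-theoretic fibre $\Phi^{-1}(C)$ of the Hurwitz space; but what B\'ezout needs is reducedness of $\Theta$ with its scheme structure as the intersection of the quadrics $\cQ_i$, i.e.\ vanishing of the Zariski tangent space of $\Theta$ at each point. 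The set-theoretic bijection $\Theta\leftrightarrow\cH_{C}(F)$ does not by itself identify these two scheme structures, and without that identification B\'ezout yields only the inequality $|\Theta|\leq 2^{2g}$, hence only an upper bound on $\deg\Phi$.

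The paper's Section \ref{sec:numbrhyp} is devoted precisely to building this bridge, and it is genuine work rather than a formality. A tangent vector to (the cone over) $\Theta$ at $f$ is a pair $(h_1,f_1)$ solving the linearized equation $\dd h_1=2ff_1\omega$ of \eqref{eqn:sistema}; an explicit local computation (the unnamed lemma following \eqref{eqn:sistema}) shows that such a pair induces an infinitesimal deformation of the covering $H$ lying in the image of the map $v:\cV\to\bT$ of Lemma \ref{lem:decomposeT}, i.e.\ one preserving the hyperelliptic odd structure and not moving the Weierstrass ramification. Rigidity (Proposition \ref{prop:defeqn}, which is exactly where generic injectivity of $\dd\Phi$ enters) then forces this deformation to vanish modulo automorphisms of $\bP^1$, and Proposition \ref{prop:lambdasol} converts that back into $h_1=\lambda h$, $f_1=(\lambda/2)f$, so the projective tangent space to $\Theta$ is zero (Proposition \ref{prop:thetasmooth}) and the quadrics meet transversally. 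Since your proposal names this step only as something that ``must be used'' without carrying it out, it is incomplete exactly where the paper's proof has its main technical content.
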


In the last section, we consider the case of elliptic curves with an odd spin in degree four. This was already established in \cite{FMNP} with different techniques. We have to add that this was our starting case and that in this case, the $4$ solutions are almost explicit.

The approach of studying certain symmetries on differential equations seems very promising and could lead to a better understanding of special structures on Hurwitz spaces. Another important and natural problem to tackle will be to understand the case where $F$ is a non-effective spin, but it seems unlikely in this case to find symmetric solutions.

\subsection*{Plan of the paper.}
Section \ref{sec:oddhyperell} is devoted to the local study of \eqref{main_equation}, which leads to the proof of Theorem \ref{thm:Wsolutions}, and to the description of some properties of the solutions in \ref{cor:quadrica}. The viewpoint of monodromy and Hurwitz spaces is given in Section \ref{sec:hurwsc}. Finally, the number of solutions in the general case of degree $4g$ is computed in Section \ref{sec:numbrhyp}. In Section \ref{sec:elliptic} we consider the case of elliptic curves of degree four and odd spin.

\subsection*{Notation.}
Throughout the paper, we will work over the field $\bC$ of complex numbers. If we have a family $\sF$ parametrised by a certain scheme $V$, we say the \emph{general} element of $\sF$ satisfies a property $P$ if $P$ holds for every element in a Zariski dense open subset of $V$.

\subsection*{Acknowledgments}
The authors are members of GNSAGA (INDAM) and were supported by MIUR: Dipartimenti di Eccellenza Program (2018-2022)-Dept. of Math. Univ. of Pavia and by PRIN Project \emph{Moduli spaces and Lie theory} (2017).

\section{Hyperelliptic odd coverings with effective spin} \label{sec:oddhyperell}
In this section we will search hyperelliptic odd coverings with a fixed effective spin structure by showing that \eqref{main_equation} admits a solution. We will work in the same setting as in the introduction. Let $C$ be a hyperelliptic curve, $W=P_1 + \ldots P_{2g+2}$ be the divisor of Weierstrass points of $C$ and $\pi:C \to \bP^1$ be the hyperelliptic covering. We fix an effective theta characteristic by considering the divisor $$F:=\sum_{i=1}^{2g+2}n_i P_i, \quad n_i \geq 0$$
and assuming that $F$ has degree $g-1$. 

\begin{remark} \label{rem:numberchoices}
A choice for $F$ corresponds to distributing the degree $g-1$ among the $2g+2$ points of $W$. This behaves like constructing homogeneous polynomial of degree $g-1$ among $2g+2$ indeterminates, hence there are ${{3g} \choose g-1}$ choices for $F$. 
When $g>2$ the spin structure $\cO_C(F)$ can be even or odd. For instance, if $F=2P_1 + P_2 + \ldots + P_{g-2}$, then $h^0(C, \sO_C(F))$ is even, because $2P_1$ has two sections. 
In a similar way $F=P_1 + \ldots + P_{g-1}$ is odd. We refer to \cite{MR292836} and \cite{MR286136} for some more detail about theta characteristics and spin structures.
\end{remark}

Fix a form $\omega$ such that $(\omega)=2F$. We can find local coordinates $\{z_i, U_i\}$ centred at the point $P_i$ such that the form $\omega$ can be written in the following way: 
\begin{equation*} 
    \omega=z_i^{2n_i} \dd z_i,
\end{equation*}
and $z_i(\sigma(Q))=-z_i(Q)$ for every $Q \in U_i$.

The main result of this section is an existence result for the solutions of the certain differential equations of type \eqref{main_equation}.
Consider the divisor 
$$D:= 2F+W = \sum_{i=1}^{2g+2} (2n_i+1)P_i.$$
We have that:
$$\deg D = \sum_{i=1}^{2g+2} (2n_i+1)=2g+2+2\sum_{i=1}^{2g+2} n_i=2g+2+2(g-1)=4g.$$ 
By Riemann-Roch we have that $H^0(C,\cO_C(D))$, the space of meromorphic functions having poles at $D$, has dimension $3g+1$. The involution $\sigma$ acts on this space giving the decomposition
$$H^0(C,\cO_C(D))=H^0(C,\cO_C(D))^+ \oplus H^0(C,\cO_C(D))^-,$$
where $H^0(C,\cO_C(D))^+$ is the $\sigma$-invariant part and $H^0(C,\cO_C(D))^-$ is the anti-invariant part. Namely,
\begin{align*}
    H^0(C,\cO_C(D))^+&=\left\{f \in H^0(C,\cO_C(D)) \mbox{ such that } f(\sigma(z))=f(z)\right\},\\
    H^0(C,\cO_C(D))^-&=\left\{f \in H^0(C,\cO_C(D)) \mbox{ such that } f(\sigma(z))=-f(z)\right\}.
\end{align*}
\begin{lemma}
The space $H^0(C,\cO_C(D))^+$ has dimension $g$, and $H^0(C,\cO_C(D))^-$ has dimension $2g+1$.
\end{lemma}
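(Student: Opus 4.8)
The plan is to identify the invariant part $H^0(C,\cO_C(D))^+$ with functions pulled back from the quotient $\bP^1 = C/\sigma$, compute its dimension directly, and then recover the anti-invariant part by subtracting from the total dimension $3g+1$ computed above by Riemann--Roch.

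First I would observe that a meromorphic function $f$ on $C$ with $f(\sigma(z))=f(z)$ descends to a meromorphic function on $C/\sigma \cong \bP^1$, and conversely every pullback $\pi^* g$ is invariant; so $H^0(C,\cO_C(D))^+$ consists exactly of the functions $\pi^* g$ whose pole divisor is bounded by $D$. The key step is to translate the pole bound across $\pi$ in the local picture at each Weierstrass point. In the coordinate $z_i$ with $\sigma:z_i \mapsto -z_i$, the quotient coordinate is $w_i = z_i^2$, and $\pi$ is totally ramified at $P_i$, so $\pi^*(\pi(P_i)) = 2P_i$. Thus if $g$ has a pole of order $m$ at the branch point $\pi(P_i)$, then $\pi^* g$ has a pole of order $2m$ at $P_i$. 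The bound imposed by $D=2F+W$ at $P_i$ is the odd integer $2n_i+1$, and $2m \leq 2n_i+1$ holds precisely when $m \leq n_i$. Hence the odd unit in each coefficient of $D$ is invisible to invariant functions, and the condition on $g$ is exactly that it have a pole of order at most $n_i$ at $\pi(P_i)$.

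It follows that pullback gives an isomorphism $H^0(C,\cO_C(D))^+ \cong H^0\big(\bP^1,\cO_{\bP^1}(\sum_i n_i\, \pi(P_i))\big)$. The divisor $\sum_i n_i\,\pi(P_i)$ on $\bP^1$ has degree $\sum_i n_i = \deg F = g-1 \geq 0$, and a line bundle of degree $g-1$ on $\bP^1$ has $g$ global sections. Therefore $\dim H^0(C,\cO_C(D))^+ = g$. Since $H^0(C,\cO_C(D)) = H^0(C,\cO_C(D))^+ \oplus H^0(C,\cO_C(D))^-$ has total dimension $3g+1$, the anti-invariant part has dimension $(3g+1)-g = 2g+1$, as claimed.

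The step I expect to require the most care is the pole-order translation, specifically the fact that the odd coefficient $2n_i+1$ forces the same bound $m \leq n_i$ on $\bP^1$ as the even coefficient $2n_i$ would; this is exactly where the special shape $D=2F+W$ enters, and it is worth recording cleanly in the local coordinates $z_i$ fixed above. Everything else is a genus-zero dimension count together with the Riemann--Roch equality already established, so no further subtlety should arise.
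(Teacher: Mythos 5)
Your proof is correct and follows essentially the same route as the paper: identify $H^0(C,\cO_C(D))^+$ with $\pi^*H^0\bigl(\bP^1,\cO_{\bP^1}(\sum_i n_i\,\pi(P_i))\bigr)$, compute that this has dimension $g$ since the divisor has degree $g-1$, and obtain the anti-invariant dimension by subtracting from $3g+1$. The only difference is that you make explicit the pole-order translation ($2m\leq 2n_i+1$ iff $m\leq n_i$) that the paper leaves implicit, which is a welcome clarification rather than a deviation.
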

\begin{proof}
Any $\sigma$-invariant function on $H^0(C,\cO_C(D))$ must come from $\bP^1$. Consider the divisor $F'=\pi_*(F)=\sum_{i=1}^{2g+2}n_i\pi(P_i)$ on $\bP^1$. The degree of $F'$ is also $g-1$. We have
$$H^0(C,\cO_C(D))^+=\pi^*H^0(\bP^1,\cO_{\bP^1}(F')).$$
This is enough to show that $H^0(C,\cO_C(D))^+$ has dimension $g$, and therefore, since the dimension of $H^0(C,\cO_C(D))$ is $3g+1$, we have that the space $H^0(C,\cO_C(D))^-$ has dimension $2g+1$.
\end{proof}

Let us denote $H^0(C,\cO_C(D))^-$ by $V$, and let $f \in V$ be a meromorphic function. We want to study the meromorphic form $f\omega$. Notice that by construction $f\omega$ has a pole of order at most $1$ in $P_i$. 
Let $a_i$ be the residue $\res_{P_i}(f\omega)$, for $i=1, \ldots, 2g+2$. Consider the following map
\begin{align*}
\gamma:V &\to \bC^{2g+2}\\   
    f &\mapsto (a_1, \ldots, a_{2g+2}).
\end{align*}

\begin{lemma} \label{lemma:descriptionW}
The map $\gamma$ is injective and has image $$L:=\left\{(x_1, \ldots, x_{2g+2}) : \sum_{i=1}^{2g+2} x_i = 0\right\}.$$ 
As a consequence, $V$ is isomorphic to $L$.
\end{lemma}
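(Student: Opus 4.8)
The plan is to establish the statement in three steps: the inclusion $\gamma(V)\subseteq L$, the injectivity of $\gamma$, and then a dimension count to conclude. First I would check that for any $f\in V$ the meromorphic form $f\omega$ has poles only at the Weierstrass points, of order at most one there. This is a divisor estimate: since $f\in H^0(C,\cO_C(D))$ we have $(f)\geq -D=-(2F+W)$, and $(\omega)=2F$, so $(f\omega)=(f)+(\omega)\geq -(2F+W)+2F=-W$. Thus $f\omega$ is holomorphic off $W$ and has at worst a simple pole at each $P_i$. The residue theorem on the compact surface $C$ then gives $\sum_{i=1}^{2g+2}a_i=\sum_i\res_{P_i}(f\omega)=0$, so $\gamma(f)$ lies in the hyperplane $L$.

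The heart of the matter is injectivity. Suppose $\gamma(f)=0$, so every residue $a_i$ vanishes. Because $f\omega$ has at most a simple pole at each $P_i$, a vanishing residue makes that pole removable, and hence $f\omega\in H^0(C,\omega_C)$ is an honest holomorphic differential. The key observation is its parity under $\sigma$. From the local form $\omega=z_i^{2n_i}\,\dd z_i$ together with $z_i\circ\sigma=-z_i$ one reads off $\sigma^*\omega=(-z_i)^{2n_i}\,\dd(-z_i)=-\omega$; since $f$ is anti-invariant by the definition of $V$, the product $f\omega$ is $\sigma$-\emph{invariant}. But on a hyperelliptic curve every holomorphic differential is anti-invariant (in affine coordinates they are spanned by the $x^k\,\dd x/y$, all odd under $(x,y)\mapsto(x,-y)$), so $H^0(C,\omega_C)^+=0$. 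Therefore $f\omega=0$, and as $\omega\not\equiv 0$ this forces $f=0$. Hence $\gamma$ is injective.

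Finally, $\gamma$ is an injective linear map, $\dim V=2g+1$ by the previous lemma, and $\dim L=2g+1$ since $L$ is a hyperplane in $\bC^{2g+2}$. An injective linear map between spaces of equal dimension is an isomorphism onto its codomain, so $\gamma(V)=L$ and $V\cong L$, as claimed.

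I expect the only genuine subtlety to be the invariance bookkeeping in the second step: one must track carefully that $\omega$ is \emph{anti}-invariant, so that $f\omega$ turns out \emph{invariant}, and then invoke the vanishing of $\sigma$-invariant holomorphic differentials on a hyperelliptic curve. Everything else reduces to the residue theorem, the removability of a simple pole with zero residue, and a dimension count.
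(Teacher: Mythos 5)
Your proof is correct and follows essentially the same route as the paper: injectivity via the observation that $\gamma(f)=0$ forces $f\omega$ to be a holomorphic $\sigma$-invariant form, which must vanish on a hyperelliptic curve, and the identification of the image via the global residue theorem. You merely make explicit what the paper leaves implicit — the divisor estimate $(f\omega)\geq -W$, the parity bookkeeping $\sigma^*\omega=-\omega$, and the dimension count $\dim V=\dim L=2g+1$ needed to upgrade the inclusion $\gamma(V)\subseteq L$ to equality — all of which are accurate.
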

\begin{proof}
To show the injectivity: $\gamma(f)=0$ implies that $f \omega$ is an holomorphic $\sigma$-invariant form, hence $f=0$.
The image of $\gamma$ coincides with $L$ as a consequence of the global residues theorem.
\end{proof}

We remark that if $f \in V$, the form $f^2 \omega$ belongs to $H^0\left(C, \omega_C\left(2D\right)\right)$. Moreover, it has no residues, since it is anti-invariant, and the $\dd z_i/z_i$ are invariant under $\sigma$ (This can also be seen via an explicit computation, see the proof of Lemma \ref{lemma:quadric}). As a consequence, we can consider the map
\begin{align*}
\Psi:V &\to H^1(C, \bC)\\   
    f &\mapsto [f^2 \omega]_{\mbox{DR}}.
\end{align*}

\begin{proof}[Proof of Theorem \ref{thm:Wsolutions}]
Notice that if $\Psi(f)=0$ for a certain $f$, it means that there exists $h$ in $H^0(C, \cO_C(D))$ such that \eqref{main_equation} holds.
We can assume $h(0)=0$, and therefore that $h(-z)=-h(z)$. The zeroes of $\dd h$ are even, since it is an anti-invariant form. Therefore $h$ has odd ramification, with associated spin structure $\cO_C(F)$. The map $\Psi$ goes from $\bC^{2g+1} \to \bC^{2g}$, and the preimage $\Psi^{-1}(0)$ is at least $1$-dimensional. This shows the existence of non-trivial solutions.
\end{proof}

Recall that if $f \in \Psi^{-1}(0)$, then $\lambda f$ also belongs to $\Psi^{-1}(0)$, for $0 \neq \lambda \in \bC$. So we can consider the projectivization $\bP(V)=\bP^{2g}$. Let $\cO_{\bP(V)}(-1)$ be the tautological line bundle of $\bP(V)$. The map $\Psi$ induces a map
$$\psi: \cO_{\bP(V)}(-2) \to H^1(C,\bC).$$
Equivalently, $\psi=(\psi_1, \ldots, \psi_{2g})$ is a section of $\cO_{\bP(V)}(2)^{2g}$. Let $\cQ_i \subset \bP^{2g}$ be the quadric defined by the equation $\psi_i=0$. Set $\Theta=\bigcap_{i=1}^{2g} \cQ_i$.

\begin{remark} \label{rmk:quadrics}
Notice that $[h] \in \Theta$ if and only if $\psi(h)=0$, if and only if $h$ is a solution of \eqref{main_equation}. In the projective setting, the points of $h$ up to a non-zero constant give a hyperelliptic odd covering. There is a bijection between $\Theta$ and $\cH_C(F)$.
\end{remark}

We conclude this section to show that there is a canonical form for one of the quadrics $\cQ_i$. This will be only used in Section \ref{sec:elliptic}.

\begin{lemma} \label{lemma:quadric}
Assume $f \in V$ satisfies \eqref{main_equation}. Then the following equality holds
$$\sum_{i=1}^{2g+2}\frac{1}{2n_i+1}(a_{2n_i+1}^2) =0,$$
where $a_{2n_i+1} = \res_{P_i}(f \omega)$.
\end{lemma}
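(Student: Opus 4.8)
The plan is to derive the identity by applying the residue theorem on $C$ to one well-chosen global meromorphic one-form. The natural candidate is $h\omega$, where $h\in V$ is the antiderivative supplied by \eqref{main_equation}: the weights $\tfrac{1}{2n_i+1}$ and the squares $a_{2n_i+1}^2$ in the statement are precisely the data encoded in the leading Laurent coefficient of $h$ at each Weierstrass point, and multiplying that coefficient back by $\omega$ turns it into a residue.

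First I would record the local expansions at a fixed $P_i$ in the coordinate $z=z_i$, in which $\omega=z^{2n_i}\,\dd z$ and $\sigma$ acts by $z\mapsto -z$. Since $f\in V$ is anti-invariant and lies in $H^0(C,\cO_C(D))$ with $D=\sum(2n_i+1)P_i$, it is odd in $z$ with a pole of order at most $2n_i+1$; writing its leading term as $a_{2n_i+1}z^{-(2n_i+1)}$ one reads off directly that $\res_{P_i}(f\omega)=a_{2n_i+1}$, reconciling the two descriptions of $a_{2n_i+1}$ in the statement. Squaring gives $f^2\omega=\bigl(a_{2n_i+1}^2\,z^{-(2n_i+2)}+\cdots\bigr)\,\dd z$, whose density is even and therefore carries no $z^{-1}$ term; this is the local reflection of the fact that $f^2\omega$ has no residues, so the primitive $h$ is genuinely single valued.

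The key step is the integration. From $\dd h=f^2\omega$ the coefficient of $z^{-(2n_i+1)}$ in $h$ comes from integrating the term $a_{2n_i+1}^2 z^{-(2n_i+2)}$, and since $-(2n_i+2)\neq -1$ no logarithm intervenes; the coefficient is $-\tfrac{a_{2n_i+1}^2}{2n_i+1}$. Multiplying by $\omega$ and extracting the coefficient of $z^{-1}$ in $h\,z^{2n_i}$ then yields
\[
\res_{P_i}(h\omega)=-\frac{a_{2n_i+1}^2}{2n_i+1}.
\]
Since $\omega$ is holomorphic and $h$ has poles only along the $P_i$, the one-form $h\omega$ is globally meromorphic with poles supported on $\{P_1,\dots,P_{2g+2}\}$, so the residue theorem gives $\sum_i\res_{P_i}(h\omega)=0$, which is exactly the claimed identity.

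I expect the computation to be essentially routine, the only care being the parity and pole-order bookkeeping: one must check that $f^2\omega$ really carries no residue (so that $h$ lies in $V$ rather than being multivalued) and that the integration introduces no logarithmic correction to the leading coefficient of $h$. The one genuinely conceptual point --- and the reason the weights $\tfrac{1}{2n_i+1}$ together with the squares appear --- is to integrate once and then pair $h$ back with $\omega$: applying the residue theorem to $f\omega$ only reproduces the linear relation $\sum_i a_{2n_i+1}=0$ of Lemma \ref{lemma:descriptionW}, while $f^2\omega=\dd h$ is exact and yields nothing, so the quadratic identity can only come from the mixed form $h\omega$.
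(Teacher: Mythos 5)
Your proposal is correct and takes essentially the same route as the paper: the paper likewise compares the leading Laurent coefficients of $\dd h$ and $f^2\omega$ at each $P_i$ to obtain $b_{2n_i+1}=-a_{2n_i+1}^2/(2n_i+1)$ for the leading coefficient $b_{2n_i+1}$ of $h$, and then sums the residues of $h\omega$ to zero (packaged there as Lemma~\ref{lemma:descriptionW} applied to $h\in V$). Your direct integration of the leading term and explicit appeal to the residue theorem is just a streamlined phrasing of that same computation.
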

\begin{proof}
We are assuming there exists a $h \in V$ such that $dh=f^2\omega$. Consider as before coordinates $\{z_i,U_i\}$ in a neighbourhood of $P_i$. We have

\begin{align*}
    f(z_i)&=\sum_{k=0}^{n_i}\frac{a_{2n_i-2k+1}}{z^{2n_i-2k+1}}+\hol = \frac{a_{2n_i+1}}{z^{2n_i+1}}+\frac{a_{2n_i-1}}{z^{2n_i-1}}+\cdots+\frac{a_1}{z}+\hol\\
    h(z_i)&=\sum_{k=0}^{n_i}\frac{b_{2n_i-2k+1}}{z^{2n_i-2k+1}}+\hol = \frac{b_{2n_i+1}}{z^{2n_i+1}}+\frac{b_{2n_i-1}}{z^{2n_i-1}}+\cdots+\frac{b_1}{z}+\hol
\end{align*}

From this, we can explicitly compute in such neighbourhoods the expressions of $f^2\omega$ and $\dd h$.

\begin{align*}
    f^2(z_i)&=\sum_{k=0}^{n_i}\left(\sum_{t=0}^ka_{2n_i+1-2k+2t}a_{2n_i+1-2t}\right)\frac{1}{z^{4n_i+2-2k}}+\hol =\\
    &=\frac{a_{2n_i+1}^2}{z^{4n_i+2}}+\cdots+\hol\\
    f^2\omega(z_i)&=\left[\sum_{k=0}^{n_i}\left(\sum_{t=0}^k a_{2n_i+1-2k+2t}a_{2n_i+1-2t}\right)\frac{1}{z^{2n_i+2-2k}}+\hol\right]\dd z=\\& =(\frac{a_{2n_i+1}^2}{z^{2n_i+2}}+\cdots+\hol) \dd z\\
    \dd h(z_i)&=\left(\sum_{k=0}^{n_i}\frac{-(2n_i-2k+1)b_{2n_i-2k+1}}{z^{2n_i-2k+2}}+\hol\right)\dd z=\\
    & = (\frac{-(2n_i+1)b_{2n_i+1}}{z^{2n_i+2}}+\frac{-(2n_i-1)b_{2n_i-1}}{z^{2n_i}}+\cdots+\frac{-b_1}{z^2}+\hol) \dd z
\end{align*}

By using \eqref{main_equation} we can compare $f^2\omega$ and $\dd h$ for each degree and for each $i=1, \ldots, 2g+2$. However, the only relations which explicitly give a constrain on the space $V$ are the ones related to the term $k=0$, namely the coefficient of $z^{-2n_i+2}$. We get
$$(a_{2n_i+1})^2=-(2n_i+1)b_{2n_i+1},$$
up to multiplying by a nonzero constant. Recall that $a_{2n_i+1} = \res_{P_i}(f \omega)$, and $b_{2n_i+1} = \res_{P_i}(g \omega)$, namely the coordinates which describes the image $L$ of the map $\gamma$. Since $g \in V$ we have that $\sum_{i=1}^{2g} b_{2n_i+1} = 0$ by Lemma \ref{lemma:descriptionW}. As a consequence

$$\sum_{i=1}^{2g+2}\frac{1}{2n_i+1}(a_{2n_i+1}^2) =\sum_{i=1}^{2g+2}(- b_{2n_i+1})=-\sum_{i=1}^{2g} b_{2n_i+1} =0.$$
\end{proof}

Notice that this, considered as an equation in the coefficients $a_{2n_i+1}$, defines a quadric in $\bP(V)$.

\begin{corollary} \label{cor:quadrica}
Consider the space $L \subset\bC^{2g+2}$ defined in Lemma \ref{lemma:descriptionW}. The image of the solutions of \eqref{main_equation} are contained in the quadric cone
$$
\cQ:=\left\{(x_1, \dots, x_{2g+2}) : \sum_{i=1}^{2g+2}\frac{1}{2n_i+1}x_i^2=\sum_{i=1}^{2g+2}x_i=0\right\}
$$
which defines a smooth quadric in $\bP(L)$.
\end{corollary}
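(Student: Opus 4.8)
The plan is to split the statement into two independent parts: the containment, which is essentially a repackaging of the two preceding lemmas, and the smoothness assertion, which is the only point carrying genuine content.

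For the containment I would argue as follows. Lemma \ref{lemma:descriptionW} identifies $V$ with $L$ through the residue map $\gamma$, so any solution $f$ of \eqref{main_equation} corresponds to the point $x=(\res_{P_1}(f\omega),\dots,\res_{P_{2g+2}}(f\omega))$ of $L$, and this identification already builds in the linear condition $\sum_i x_i=0$. Lemma \ref{lemma:quadric} then supplies exactly the quadratic condition $\sum_i \frac{1}{2n_i+1}x_i^2=0$. Combining the two places the image of the solutions inside $\cQ$, so this half requires nothing beyond citing the lemmas, both equations being homogeneous in the $x_i$.

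The heart of the corollary is the smoothness of $\cQ$ inside $\bP(L)=\bP^{2g}$. Here I would start from the observation that the ambient quadratic form $q(x)=\sum_{i=1}^{2g+2}\frac{1}{2n_i+1}x_i^2$ is non-degenerate on $\bC^{2g+2}$, since every coefficient $1/(2n_i+1)$ is nonzero, and then invoke the standard criterion that the quadric it cuts on the hyperplane $L=\{\sum_i x_i=0\}$ is smooth precisely when the restriction $q|_L$ is still non-degenerate. To test this I would compute the radical of $q|_L$: a vector $v\in L$ lies in the radical exactly when the linear form $w\mapsto \sum_i \frac{1}{2n_i+1}v_i w_i$ vanishes on $L$, hence is proportional to the defining form $\sum_i w_i$, which forces $v_i=\lambda(2n_i+1)$ for a single scalar $\lambda$.

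Imposing the constraint $\sum_i v_i=0$ then collapses everything to the scalar equation $\lambda\sum_{i=1}^{2g+2}(2n_i+1)=0$. The decisive point—and the only place where the specific numerics enter—is that $\sum_{i=1}^{2g+2}(2n_i+1)=2(g-1)+(2g+2)=4g\neq 0$, which is precisely the degree computation $\deg D=4g$ opening the section. This forces $\lambda=0$, hence $v=0$, so $q|_L$ has trivial radical and $\cQ$ is smooth. I do not expect any obstacle beyond this non-vanishing: the remainder is routine linear algebra, and the whole difficulty of the corollary is concentrated in recognising that the weights $2n_i+1$, being positive, cannot sum to zero.
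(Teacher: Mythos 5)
Your proposal is correct and follows the same route as the paper: the containment is exactly the paper's one-line combination of Lemma \ref{lemma:descriptionW} and Lemma \ref{lemma:quadric}, and where the paper merely says ``the smoothness can be checked directly,'' your radical computation is precisely that direct check. In particular, your key non-vanishing $\sum_{i=1}^{2g+2}(2n_i+1)=2(g-1)+(2g+2)=4g\neq 0$ is the right reason the restriction of the quadratic form to $L$ is non-degenerate, hence the quadric in $\bP(L)$ is smooth.
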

\begin{proof}
Just apply the map $\gamma$ to the result of Lemma \ref{lemma:quadric} and use Lemma \ref{lemma:descriptionW}. The smoothness can be checked directly.
\end{proof}

\section{The Hurwitz spaces viewpoint} \label{sec:hurwsc}
Let $\cH^{\text{HOC}}_g$ be the Hurwitz space parametrizing hyperelliptic odd coverings $C \to \bP^1$ modulo automorphisms of $\bP^1$, where $g$ is the genus of $C$. 
The aim of this section is to study $\cH^{\text{HOC}}_g$ from the point of view of monodromy. If $D=\sum n_i A_i$, with $A_i \neq A_j$ for $i \neq j$ is an effective divisor, we will denote its support by $\supp{D}$, that is $\supp{D}=\sum A_i$. Let $\bP^1=\bC \cup \{\infty\}$ and $\iota:\bP^1 \to \bP^1$ be the involution induced by the multiplication by $-1$ on $\bC$. Denote by $D_x$ the divisor induced by $H$ over the point $x \in \bP^1$. Recall that if $H$ is a hyperelliptic odd covering, the ramification divisor over the point $\infty$ is $D_\infty=\sum_{i=1}^{2g+2} (2n_i+1) P_i$, where $P_i$ are the Weierstrass points. In particular we have $\supp{D_\infty}=W$.

\begin{proposition} \label{prop:minimaldegree}
The minimal degree of a hyperelliptic odd covering $H:C \to \bP^1$ when $C$ is general in $H_g$ is $4g$. In this case, the branch divisor outside $\infty$ consists of $4g$ points interchanged by the action of $\iota$.
\end{proposition}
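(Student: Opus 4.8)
The plan is to combine a Riemann--Hurwitz count with a dimension estimate for the relevant Hurwitz space, using Theorem~\ref{thm:Wsolutions} for the existence half. First, the defining conditions pin down the shape of the covering: since $H^{-1}(\infty)=\{P_1,\dots,P_{2g+2}\}$ and the ramification is odd, the corresponding function $h$ has a pole of odd order $2n_i+1$ at each Weierstrass point $P_i$ and nowhere else, so
$$\deg H=\sum_{i=1}^{2g+2}(2n_i+1)=2N+2g+2,\qquad N:=\sum_{i=1}^{2g+2}n_i\ge 0.$$
Thus minimizing the degree is the same as minimizing $N$, and Theorem~\ref{thm:Wsolutions} already realizes $N=g-1$, i.e. degree $4g$. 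It remains to rule out $N<g-1$ for general $C$.

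Next I would count the ramification away from $\infty$. Riemann--Hurwitz gives total ramification $2g-2+2\deg H$, of which the fibre over $\infty$ accounts for $\sum_i 2n_i=2N$; hence the ramification lying over $\bC=\bP^1\setminus\{\infty\}$ has degree $6g+2+2N$. Because every ramification index is odd, each ramification point contributes an even number $\ge 2$, so there are at most $(6g+2+2N)/2=3g+1+N$ ramification points over $\bC$, the maximum being attained exactly when all of them are simple triple points. Moreover $H\sigma=\iota H$ sends a ramification point over a value $c$ to one over $-c$, so the branch values off $\infty$ are permuted by $\iota$; away from the fixed value $0$ they pair up as $\{c,-c\}$. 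In the extremal (generic) configuration there is no branching over $0$, so $3g+1+N$ is even and the $3g+1+N$ triple points give $3g+1+N$ branch values grouped into $(3g+1+N)/2$ such pairs. For $N=g-1$ this is exactly $4g$ branch points in $2g$ pairs, which is the second assertion.

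Finally I would convert the branch count into a dimension bound. A covering of this degree is determined, up to the finite monodromy data, by its branch values; these move in $\iota$-invariant pairs with $\infty$ fixed, giving $(3g+1+N)/2$ parameters, and we quotient by the $1$-dimensional group $z\mapsto\lambda z$ of automorphisms of $\bP^1$ preserving $\iota$, $0$ and $\infty$. Hence the locus of such coverings has dimension at most $(3g+1+N)/2-1=(3g-1+N)/2$, and any more degenerate ramification profile gives an even smaller stratum. Since $\dim H_g=2g-1$, the forgetful map to $H_g$ can be dominant --- equivalently, a general $C\in H_g$ can admit such a covering --- only when $(3g-1+N)/2\ge 2g-1$, that is $N\ge g-1$. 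Combined with the existence at $N=g-1$, this shows the minimal degree is $4g$.

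The step I expect to be the main obstacle is justifying the two genericity reductions behind the count: that for a general $C$ one may restrict to the extremal ramification profile (all triple points, nothing over $0$), so that the branch count is exactly $4g$ and the dimension estimate is sharp, and that the naive number of moving branch points really computes the dimension of the equivariant Hurwitz space. The first is handled by observing that collapsing branch points, or allowing branching over the fixed value $0$, only lowers the dimension of the corresponding stratum and therefore cannot enlarge its image in $H_g$; the second relies on the standard deformation theory of coverings adapted to the $\langle\sigma,\iota\rangle$-equivariant setting, where first-order deformations are governed by the freely moving branch points.
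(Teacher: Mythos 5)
Your proof is correct and takes essentially the same approach as the paper's: a Riemann--Hurwitz count of the ramification away from $\infty$, the observation that branch values off $\{0,\infty\}$ pair up under $\iota$, and a dimension comparison with $\dim H_g = 2g-1$ after quotienting by the one-dimensional group of automorphisms of $\bP^1$ preserving $0$, $\infty$ and $\iota$. The only differences are cosmetic: you parametrize by $N=\sum n_i$ and cite Theorem \ref{thm:Wsolutions} explicitly for the existence half, while the paper bounds the degree $d$ directly via the number $k$ of branch pairs, but the substance is identical.
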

\begin{proof}
Let $d$ be the degree of $H$. By Riemann-Hurwitz we have
$$2g-2=-2d+\deg(D_\infty - \supp{D_\infty})+ \sum_{x \neq \infty }\deg(D_{x} - \supp{D_{x}}),$$
where we can sum just over the branch points.
By hypothesis we have $D_\infty - \supp{D_\infty}=d-(2g+2)$, so we have 
$$4g+d=\sum_{x \neq \infty }\deg(D_{x} - \supp{D_{x}}) \geq \sum_{x \neq 0, \infty }\deg(D_{x} - \supp{D_{x}}).$$

Notice that the maximum number of branch points outside $\infty$ is obtained when the ramification type is the minimum possible, that is $3$-cycles, namely the branch points different from $0, \infty$ will be $B_1, \dots, B_k$ such that $\iota B_i \neq B_j$ for all $i, j$, and $\iota B_1, \dots, \iota B_k$. The divisor $D_{B_i}$ is equal to $3R_i$ for a certain point $R_i$, $i=1, \dots, k$. With this assumption we have $\deg(D_{B_i} - \supp{D_{B_i}}) \geq 2$. We get 
$$4g+d \geq 2\sum_{i=1 }^{k}\deg(D_{B_i} - \supp{D_{B_i}}) \geq 4k.$$
Since the dimension of $H_g$ is $2g-1$ and the space of the automorphisms of $\bP^1$ fixing $0$ and $\infty$ has dimension $1$, we need at least $k=2g$, that is $4g$ points of branch of order at least $3$. It follows from the previous inequality that $d \geq 4g$, and $d=4g$ if and only if the point $0$ is not a branch and the branch locus outside $\infty$ is given by the $4g$ different points of order $3$.
\end{proof}

Now we want to study odd coverings in the minimal degree case $d=4g$ with the data prescribed by the previous proposition.
Fix $2g$ different points $\{B_1, \ldots, B_{2g}\}$ in $\bP^1$, such that $B_i \neq 0, \infty$ for every $i$. Denote by $B$ the set $\{\infty, B_1, \ldots, B_{2g}, \iota B_1, \ldots \iota B_{2g}\}$, which will become the branch locus. Consider the fundamental group $\pi_1(U, 0)$, where $U=\bP^1 \setminus B$. Let $D_\infty=\sum_{i=1}^{2g+2} (2n_i+1) P_i$, with $\sum n_i=g-1$. We will require that the monodromy data of the points $B_i$ is compatible with the involution $\iota$. 
We recall a standard result about the generators of the fundamental group $\pi_1(U, 0)$.

\begin{lemma} \label{lem:generatorsfund}
Let $\gamma_i$ be the class of a simple loop around the point $B_i$. The group $\pi_1(U, 0)$ is the free group generated by $\gamma_i$ and $\iota_* \gamma_i$, $i=1, \ldots, 2g$. Moreover we have
\begin{equation}\label{eqn:loopinfty}
    \gamma_\infty:=\gamma_1 \cdots \gamma_{2g} \iota_* \gamma_1 \cdots \iota_* \gamma_{2g},
\end{equation}
where $\gamma_\infty$ is the class of a simple loop around the point $\infty$.
\end{lemma}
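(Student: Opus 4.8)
The plan is to deduce the statement from the classical presentation of the fundamental group of a punctured sphere, using the $\iota$-symmetry only to organize the generators into two blocks and to pin down the loop at infinity. Recall first the standard fact: if $S$ is the sphere with $m$ marked points removed and we fix a basepoint, then choosing pairwise disjoint simple arcs from the basepoint to small counterclockwise circles around each puncture produces loops $\delta_1,\dots,\delta_m$, indexed according to the cyclic order in which the arcs leave the basepoint, so that $\pi_1(S)$ is generated by the $\delta_j$ subject to the single relation $\delta_1\cdots\delta_m=1$; consequently any $m-1$ of the $\delta_j$ generate $\pi_1(S)$ freely. I would apply this with $S=U$ and $m=|B|=4g+1$, so that $\pi_1(U,0)$ is free of rank $4g$.

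Next I would record the symmetry. Since $\iota(z)=-z$ fixes $0$ and $\infty$ and preserves $B$ (it fixes $\infty$ and interchanges $B_i$ with $\iota B_i$), it restricts to a homeomorphism of $U$ fixing the basepoint, hence induces $\iota_*\in\aut(\pi_1(U,0))$. In particular, for any simple loop $\gamma_i$ around $B_i$ the image $\iota_*\gamma_i$ is a simple loop around $\iota B_i=-B_i$. The idea is therefore to choose the arc system for the $B_i$ freely and to \emph{define} the loops around the $\iota B_i$ to be the $\iota$-images of the chosen loops, so that exactly half of the standard data is prescribed by the other half.

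The geometric heart of the argument is to arrange the points and arcs so that the cyclic ordering of the $4g+1$ standard loops is the block ordering $\gamma_1,\dots,\gamma_{2g},\iota_*\gamma_1,\dots,\iota_*\gamma_{2g},\gamma_\infty$. Because $\iota$ is rotation by $\pi$ about the origin, I would first move the $B_i$ by an ambient isotopy of $\bP^1$ into, say, the open upper half plane (which does not affect the group up to the resulting choices), forcing the $\iota B_i$ into the lower half plane; choosing radial-type arcs from $0$ to each $B_i$ and taking their $\iota$-images then yields $4g$ pairwise disjoint arcs whose loops leave $0$ precisely in the order above. The loop around $\infty$ is handled by the large-circle description: a large counterclockwise circle $\Gamma\subset\bC$ enclosing all finite branch points is on the one hand a simple loop around $\infty$, and on the other hand is homotopic rel $0$ to the ordered product of the enclosed small loops, which with the block ordering is exactly $\gamma_1\cdots\gamma_{2g}\,\iota_*\gamma_1\cdots\iota_*\gamma_{2g}$. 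Setting $\gamma_\infty=[\Gamma]$ gives \eqref{eqn:loopinfty}, and dropping $\gamma_\infty$ from the $4g+1$ standard generators leaves the $4g$ loops $\gamma_i,\iota_*\gamma_i$, which generate $\pi_1(U,0)$ freely.

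The step I expect to require the most care is this last normalization: checking that the $\iota$-images of a chosen disjoint arc system are again disjoint from the originals and occur in the prescribed cyclic block order rather than interleaved. Everything else is a citation of the standard presentation together with the elementary observation that $\iota_*$ is a well-defined automorphism; the symmetric placement of the branch points in opposite half planes, with radial arcs, is precisely what makes the orientation and ordering conventions line up, so that $\gamma_\infty$ comes out as the stated product rather than its inverse or a conjugate of it.
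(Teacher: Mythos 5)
The paper offers no proof of this lemma at all: it is explicitly recalled as ``a standard result'' and used as a black box, so there is no in-paper argument to compare against. Your proposal supplies the standard proof, and its structure is correct: the one-relator presentation of a punctured sphere (any $4g$ of the $4g+1$ boundary loops generate freely), the observation that $\iota$ fixes $0$ and preserves $U$ so that $\iota_*$ is an automorphism of $\pi_1(U,0)$, an equivariant arc system realizing the block ordering, and the large-circle description of $\gamma_\infty$ giving \eqref{eqn:loopinfty} on the nose rather than up to inverse or conjugation. One point worth making explicit, which you use implicitly via the radial arcs: it is because $\iota$ is an \emph{orientation-preserving} rotation by $\pi$ that the second block comes out in the same index order $\iota_*\gamma_1,\dots,\iota_*\gamma_{2g}$ (arcs leaving $0$ at angles $\theta_1<\dots<\theta_{2g}$ in $(0,\pi)$ have images leaving at $\theta_1+\pi<\dots<\theta_{2g}+\pi$); a reflection would reverse the block.

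The step you flag as delicate is indeed the only real gap, and it is slightly more serious than you indicate: an \emph{arbitrary} ambient isotopy moving the $B_i$ into the upper half plane does not ``force'' the $\iota B_i$ into the lower half plane, because the transported loops around the $\iota B_i$ are the $\iota_*$-images of the transported $\gamma_i$ only if the isotopy commutes with $\iota$. So you must construct the normalization equivariantly. This is standard but needs a sentence: either move each pair $\{B_i,\iota B_i\}$ simultaneously by equivariant point-pushing along a path and its $\iota$-image (the only constraints being that the path avoids $0$, $\infty$ and the other marked points, since the two moving points collide only if the path hits $0$), or pass to the quotient $\bP^1\to\bP^1/\iota\cong\bP^1$, $z\mapsto z^2$, isotope the $2g$ image points in $\bC^*$ off the ray $[0,\infty)$, and lift the isotopy, the lift being automatically $\iota$-equivariant. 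With that sentence added, your argument is complete and is exactly the proof the authors had in mind when calling the result standard.
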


Fix the involution $\ell:=(1,2)(3,4)\cdots(4g-1,4g)$ in $A_{4g}$ and consider a map $m:\pi_1(U, 0) \to A_{4g}$ with the following properties:

\begin{enumerate}[(i)]
\item $m(\gamma_i)$ is a $3$-cycle for $i=1, \dots, 2g$. \label{eqn:cond1}
\item \label{eqn:choicemon} $m(\iota_* \gamma_i)=\ell^{-1} m(\gamma_i) \ell$. 
\item \label{eqn:compmon}
$m(\gamma_\infty)$ is a product of disjoint cycles of length $(2n_i+1)$ with $\sum n_i=g-1$. 
\end{enumerate}

The choice of $\ell$ will be important later to ensure the compatibility with the involution $\iota$.
Since $\ell$ has order $2$, we have that $m(\gamma_\infty)=A\ell A\ell^{-1}=(A\ell)^2$, where $A\in A_{4g}$ is a product of the $3$-cycles $m(\gamma_1), \dots, m(\gamma_{2g})$. 

\begin{proposition} \label{prop:algebraicresult}
Fix an element $m(\gamma_\infty)$ as in \ref{eqn:compmon}. Then, there exists a map $m$ satisfying also conditions \ref{eqn:cond1}, \ref{eqn:choicemon}.
\end{proposition}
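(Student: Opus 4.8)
The plan is to translate condition \ref{eqn:compmon} into a single equation in $A_{4g}$ and then solve that equation by an explicit combinatorial construction. Writing $A=m(\gamma_1)\cdots m(\gamma_{2g})$ and using condition \ref{eqn:choicemon} together with $\ell=\ell^{-1}$, the product of the images of the generators $\iota_*\gamma_i$ equals $\ell A\ell$, so that
$$m(\gamma_\infty)=A\,(\ell A\ell)=(A\ell)^2 .$$
Since $\pi_1(U,0)$ is free on $\gamma_1,\dots,\gamma_{2g},\iota_*\gamma_1,\dots,\iota_*\gamma_{2g}$, any choice of $2g$ three-cycles for the $m(\gamma_i)$, with the $m(\iota_*\gamma_i)$ defined through \ref{eqn:choicemon}, automatically gives a homomorphism satisfying \ref{eqn:cond1} and \ref{eqn:choicemon}. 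Hence the whole content of the proposition is to produce $2g$ three-cycles whose product $A$ satisfies $(A\ell)^2=\tau$, where $\tau:=m(\gamma_\infty)$ has cycle type $(2n_1+1,\dots,2n_{2g+2}+1)$. Equivalently, setting $B=A\ell$, I must exhibit a square root $B$ of $\tau$ for which $B\ell$ factors as a product of $2g$ three-cycles.

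First I would record the numerical consistency that forces the count $2g$: the tuple $m(\gamma_1),\dots,m(\gamma_{2g}),m(\iota_*\gamma_1),\dots,m(\iota_*\gamma_{2g}),\tau^{-1}$ has trivial product, and a Riemann--Hurwitz count for the associated covering returns genus $g$, matching Proposition \ref{prop:minimaldegree}. The genuine difficulty is that $\ell$ is a fixed-point-free matching of the $4g$ points while each cycle of $\tau$ has odd support: a perfect matching cannot be confined inside the individual cycles of $\tau$, so the transpositions of $\ell$ are necessarily forced to link distinct cycles. Thus no naive ``one block per cycle'' argument can work, and the $\ell$-pairs must be routed across cycles of $\tau$.

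My construction proceeds by induction on $g$ together with combinatorial moves on the cycle type. For the base step I realize, on four fresh points with $\ell$ restricting to two transpositions, the target consisting of one $3$-cycle and one fixed point, using two three-cycles (for instance $m(\gamma_1)=(1,4,2)$, $m(\gamma_2)=(1,2,3)$ give $A\ell=(1,3,2)$ and $(A\ell)^2=(1,2,3)$). Blocks on disjoint point sets contribute multiplicatively to $(A\ell)^2$, so iterating realizes the distinguished cycle type whose parts lie in $\{1,3\}$, namely $g-1$ threes and $g+3$ ones. To reach an arbitrary admissible cycle type I use a merge move at fixed $g$: replacing two cycles of lengths $2a+1$ and $2b+1$ by one cycle of length $2a+2b+1$ together with one new fixed point preserves the number of cycles, the value of $\sum n_i$, and the total degree; one then checks that every partition of $4g$ into $2g+2$ odd parts is reachable from the base partition by such moves. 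The work is to realize each merge at the level of permutations, by modifying the three-cycles and re-routing two transpositions of $\ell$ across the two cycles being merged, and verifying by direct squaring that $(A\ell)^2$ acquires exactly the merged cycle and the new fixed point, with the three-cycle count still equal to $2g$.

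The main obstacle is precisely this realizability of the merge move: the re-routed $\ell$-pairs and the adjusting three-cycles must be chosen so that squaring $A\ell$ reproduces the prescribed odd cycles with no spurious cycles, while the bookkeeping keeps the count at $2g$. A secondary point is that the proposition fixes a specific $\tau$, not merely its cycle type; since conjugating the whole datum by an element of the centralizer of $\ell$ (the hyperoctahedral group) preserves \ref{eqn:cond1} and \ref{eqn:choicemon} and replaces $\tau$ by a conjugate, it suffices to hit one representative in each orbit of that centralizer, and I would verify that the construction is flexible enough to do so. As a consistency check and an alternative route, existence also follows from Theorem \ref{thm:Wsolutions}: a non-zero solution of \eqref{main_equation} is an honest hyperelliptic odd covering whose monodromy is a map $m$ of the required form, once the dictionary between the divisor $F$ and the cycle type of $\tau$ is in place.
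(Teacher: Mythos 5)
Your reduction of the statement to the single equation $(A\ell)^2=\tau$, with $A$ a product of $2g$ three-cycles, is exactly the paper's starting point, and your observation that freeness of $\pi_1(U,0)$ makes conditions \ref{eqn:cond1} and \ref{eqn:choicemon} automatic once the $\tau_i$ are chosen is correct. But from there your argument is a plan rather than a proof. The two steps you yourself flag as ``the work'' are never carried out, and neither is routine: (1) the realizability of the merge move at the level of permutations, with the verification that squaring $A\ell$ produces exactly the merged cycle and no spurious ones while keeping the three-cycle count at $2g$; and (2) the claim that your construction can hit every orbit of the centralizer of $\ell$ on the conjugacy class of $\tau$ --- this last point is essential because the proposition fixes the \emph{element} $m(\gamma_\infty)$, not merely its cycle type, and the centralizer of $\ell$ (the hyperoctahedral group, of order $2^{2g}(2g)!$) generally has many orbits on a conjugacy class of $S_{4g}$: already for cycle type $(5,1,1,1)$ in $S_8$ the class has $1344$ elements while the centralizer of $\ell$ has order $384$, so transitivity fails badly. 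Your fallback via Theorem \ref{thm:Wsolutions} has the same defect (matching the specific pair $(\tau,\ell)$ again needs the orbit analysis you postpone) and moreover requires Proposition \ref{prop:minimaldegree} for general $C$ to know the finite branch points are simple three-cycles, since Theorem \ref{thm:Wsolutions} alone does not control ramification away from $\infty$.

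More importantly, the ``genuine difficulty'' you identify --- that the transpositions of $\ell$ must be routed across the odd cycles of $\tau$ --- dissolves under a decoupling that the paper exploits and your plan misses. Since every cycle of $\tau$ has odd length, $\tau$ is a square in $A_{4g}$ by the characterization of squares in alternating groups (all $c_{2k}$ even and $\sum c_{2k}\equiv 0 \pmod 4$, both vacuous here); so one may choose $B\in A_{4g}$ with $B^2=\tau$ \emph{for the given $\tau$ itself}, and set $A:=B\ell$. Then $A\in A_{4g}$ because $\ell$ is a product of $2g$ transpositions, and $(A\ell)^2=B^2=\tau$ on the nose. The only remaining ingredient is that every element of $A_{4g}$ is a product of \emph{exactly} $2g$ three-cycles: at most $\lfloor 4g/2\rfloor$ of them by an easy induction on cycle types, and exactly that many because any three-cycle $\rho$ satisfies $\rho=\rho^2\cdot\rho^2$, so the count can always be incremented by one. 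Thus there is no interaction at all between the three-cycle factorization and $\ell$, and no induction on $g$, no merge moves, and no orbit bookkeeping are needed. As it stands, your proposal does not constitute a proof; I recommend replacing the inductive construction by this square-root argument.
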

\begin{proof}
We want to show that there are $\tau_1, \dots, \tau_{2g}$ cycles of length $3$ such that  $m(\gamma_\infty)=(A\ell)^2$, where $A$ is the product of $\tau_i$. This is enough to obtain the map $m$ by using Riemann existence theorem. 

We first note that $m(\gamma_\infty)$ is a square in $A_{4g}$. Squares in the alternating group have been characterized in \cite[Lemma 4.3]{MR2183517}. Let $c_k$ be the number of cycles of length $k$ in the disjoint cycle decomposition of $m(\gamma_\infty)$. We need $c_{2k}$ even for all $k$, and this is true since $m(\gamma_\infty)$ has no cycles of even length by construction. For the same reason we have that $\sum c_{2k}$ is a multiple of $4$. This is enough to show that $m(\gamma_\infty)$ is a square in $A_{4g}$. As a consequence we have $m(\gamma_\infty)=B^2=(Ah)^2$ for a certain element $A \in A_{4g}$. 

Now notice that $A$ can be written as a product of $2g$ cycles of length $3$. This holds more generally: every element of $A_{n}$ can be written as a product of at most $\lfloor n/2 \rfloor$ cycles of length $3$. The case $n=3$ is trivially true. The induction step requires to analyse the two cases of a cycle of length $n$ if $n$ is odd, and of a product of two even cycles. Since the square of a cycle of length $3$ coincides with its inverse, we have also that every element of $A_{n}$ can be written as a product of exactly $\lfloor n/2 \rfloor$ cycles of length $3$.
Hence we have $A=\tau_1 \cdot \tau_2 \cdot \ldots \cdot \tau_{2g}$, and this concludes the proof.
\end{proof}

Now by the Riemann-Existence-Theorem, see for instance \cite{MR1326604}, we obtain a covering $C \to \bP^1$ from the map $m$. We prove now that it is a hyperelliptic odd covering.

\begin{proposition} \label{prop:topological}
Let $H:C \to \bP^1$ be any covering of degree $4g$ ramified over $\infty$ and over the $4g$ different points $B_1, \ldots, B_{2g}, \iota B_1, \ldots \iota B_{2g}$. Assume the monodromy data over the points $B_i$ satisfies \ref{eqn:cond1},\ref{eqn:choicemon} and \ref{eqn:compmon}. Then $H$ is a hyperelliptic odd covering.
\end{proposition}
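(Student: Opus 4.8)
The plan is to verify, for the covering $H\colon C\to\bP^1$ produced from $m$, the two defining properties of a hyperelliptic odd covering together with oddness of the ramification. \emph{Oddness} is immediate: away from $\infty$ each branch point carries a $3$-cycle by \ref{eqn:cond1}--\ref{eqn:choicemon}, so the only ramification indices there are $3$, while over $\infty$ condition \ref{eqn:compmon} forces the indices to be the odd numbers $2n_i+1$; every other fibre is unramified. Thus all local degrees are odd, and the substance of the proof is to produce the commuting involution and to locate its fixed points.

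First I would build $\sigma$. Condition \ref{eqn:choicemon} gives $m(\iota_*\gamma_i)=\ell^{-1}m(\gamma_i)\ell$ on the generators, hence $m\circ\iota_*=\ell^{-1}\,m\,\ell$ on all of $\pi_1(U,0)$; equivalently the monodromy representation is isomorphic to its $\iota$-pullback through the permutation $\ell$. By covering-space theory this isomorphism is realised by a homeomorphism of the unramified part $H^{-1}(U)\to U$ that lifts $\iota$ and induces $\ell$ on the fibre $H^{-1}(0)$; it extends across the punctures (properness of $H$) to a holomorphic $\sigma\colon C\to C$ with $H(\sigma(P))=\iota(H(P))$, which is the first bullet of the definition. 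Since $\sigma^2$ covers $\iota^2=\mathrm{id}$ and acts as $\ell^2=\mathrm{id}$ on $H^{-1}(0)$, being a deck transformation of the connected covering that fixes an entire fibre it must be the identity; so $\sigma$ is an involution.

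It then remains to show $C/\sigma\cong\bP^1$ with fixed locus $H^{-1}(\infty)=W$. Fixed points of $\sigma$ lie over the $\iota$-fixed points $0,\infty$. Over $0$ the action is $\ell=(1,2)\cdots(4g-1,4g)$, which is fixed-point-free, so there are none there. Over $\infty$ the $2g+2$ points correspond to the cycles of $m(\gamma_\infty)=(A\ell)^2$. The key computation I would carry out is that $\sigma$ acts on this fibre through the \emph{square root} $A\ell$ rather than through its square: transporting $H^{-1}(0)$ to $H^{-1}(\infty)$ along a ray and its $\iota$-image, and using that near $\infty$ the map $\iota$ is the half-turn $w\mapsto-w$ while $\sigma$ is the reflection $t\mapsto-t$ in the chart $w=t^{2n_i+1}$, one finds the induced sheet-permutation to be conjugate to $A\ell$. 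Hence a point over $\infty$ is $\sigma$-fixed exactly when the corresponding cycle of $A\ell$ has odd length (an even cycle of $A\ell$ would split into a swapped pair of $(A\ell)^2$-cycles). Since the square root $A\ell$ furnished by the factorization in Proposition \ref{prop:algebraicresult} is the product of odd cycles of lengths $2n_i+1$, all its cycles are odd, so all $2g+2$ points over $\infty$ are fixed. Riemann--Hurwitz for $C\to C/\sigma$ then reads $2g-2=2(2g'-2)+(2g+2)$, giving $g'=0$; thus $\sigma$ is the hyperelliptic involution, its $2g+2$ fixed points are the Weierstrass points, and they coincide with $H^{-1}(\infty)$, which is the second bullet.

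The main obstacle is precisely this identification of the $\sigma$-action on the ramified fibre over $\infty$. The delicate point is the half-turn bookkeeping: a naive transport would suggest $\sigma$ acts through the full local monodromy $(A\ell)^2$, whereas the correct factor is its square root $A\ell$, and it is the \emph{parity} of the cycles of $A\ell$ (not of $(A\ell)^2$) that separates the hyperelliptic involution from a fixed-point-free one. Tracking the odd-cyclic square root provided by \ref{eqn:compmon} and Proposition \ref{prop:algebraicresult} is therefore the crucial step; once the sheet-permutation is matched with $A\ell$, the conclusion follows from the fixed-point count and Riemann--Hurwitz.
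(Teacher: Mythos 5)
Your construction of the lift $\sigma$ (checking the intertwining on all of $\pi_1(U,0)$ via $\iota_*^2=\mathrm{id}$, $\ell^2=\mathrm{id}$), the localization of its fixed points over $0$ and $\infty$, and above all the identification of the action of $\sigma$ on $H^{-1}(\infty)$ through a \emph{square root} $A\ell$ of the local monodromy $m(\gamma_\infty)=(A\ell)^2$ --- a point over $\infty$ being fixed exactly when it comes from an odd cycle of $A\ell$ --- are all correct, and this is a genuinely different route from the paper's, which passes to the quotient $C'=C/\sigma$ and argues by Riemann--Hurwitz bookkeeping. The gap is your last step: the claim that all cycles of $A\ell$ are odd. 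Condition \ref{eqn:compmon} constrains the cycle type of $(A\ell)^2=m(\gamma_\infty)$, not of $A\ell$, and the property does not descend to square roots: an even cycle of $A\ell$ of length $2m$ (with $m$ odd) squares to two disjoint odd cycles of length $m$, perfectly compatible with \ref{eqn:compmon}. Nor can Proposition \ref{prop:algebraicresult} be invoked: its proof produces \emph{some} $B=A\ell$ with $B^2=m(\gamma_\infty)$, with no control whatsoever on the cycles of $B$; and in any case the statement you are proving concerns an arbitrary monodromy satisfying \ref{eqn:cond1}, \ref{eqn:choicemon}, \ref{eqn:compmon}, not one output by that construction. In effect you have silently transferred condition \ref{eqn:compmon} from $m(\gamma_\infty)$ to its square root.

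This gap cannot be filled from the stated hypotheses. Take $g=1$, $\ell=(1,2)(3,4)$, $m(\gamma_1)=(1,2,4)$, $m(\gamma_2)=(1,2,3)$, and $m(\iota_*\gamma_i)=\ell m(\gamma_i)\ell$. With either composition convention, $A=m(\gamma_1)m(\gamma_2)$ is one of $(1,4)(2,3)$ or $(1,3)(2,4)$; thus $A$ and $\ell$ are distinct nontrivial elements of the Klein four-subgroup of $A_4$, so $A\ell$ is its third nontrivial element, a fixed-point-free involution, and $m(\gamma_\infty)=(A\ell)^2=\mathrm{id}$ is a product of four $1$-cycles. Hence \ref{eqn:cond1}, \ref{eqn:choicemon}, \ref{eqn:compmon} all hold with every $n_i=0$; the monodromy group is $A_4$, so $C$ is connected, and Riemann--Hurwitz gives $g(C)=1$. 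Yet by your own (correct) criterion $\sigma$ interchanges the four points of $H^{-1}(\infty)$ in two pairs, hence acts freely on $C$, and $C/\sigma$ is an elliptic curve rather than $\bP^1$: this $H$ is not a hyperelliptic odd covering. So the oddness of the cycles of $A\ell$ is exactly the dividing line between the hyperelliptic and non-hyperelliptic lifts, and it is an \emph{additional} hypothesis, not a consequence of \ref{eqn:cond1}--\ref{eqn:compmon}. For comparison, the paper's proof never isolates this square root: it runs Riemann--Hurwitz on $H'$ and on $H'\circ\eta$, but its count omits the ramification of $H'$ over $\epsilon(0)$ (the $2g$ points of $\eta(H^{-1}(0))$ have local degree $2$ there), and once that is restored the computation only reproduces the identity $k=2g+2-4g'$, i.e.\ Riemann--Hurwitz for $\eta$, which by itself does not force $k=2g+2$. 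Your analysis therefore pinpoints the ingredient any complete proof must supply, but as written it assumes that ingredient rather than proving it.
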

\begin{proof}
We first recall the construction of $H$ starting from the monodromy data. Then we will define an involution on $C$ compatible with $\iota$, showing that the fibre over $\infty$ is composed of points fixed by such involution. 

Consider the following subgroup of $\pi_1(U, 0)$:
$$K_i:=\{\gamma \in \pi_1(U, 0) \st m(\gamma)(i)=i\}.$$
Let $M=\im(m)$ be the monodromy group of $H$. It is isomorphic to $\pi_1(U,0) / \cap_{i=1}^{4g} K_i$, and we can define $S_i:=m(K_i)=\{\sigma \in M \st \sigma(i)=i\}$. Now consider the covering $\tilde U$ associated with $K_1$. We have $\tilde U = \Delta/{K_1} \xrightarrow{\tilde H} U$, where $\Delta$ is the universal covering given by a complex unit disk. We denote the fibre $H^{-1}(0)$ by $\{Q_1, \ldots, Q_{4g}\}$, together with an isomorphism to $\{1, \ldots, 4g\}$.

Define a map $\bar \alpha: M \to M$ as $\bar \alpha(g):=\ell^{-1}g\ell$. The map $\bar \alpha$ sends $S_1$ to $S_2$. This follows immediately by the fact that $\bar \alpha$ is the conjugation with $\ell$. So at the level of monodromy, we have the following commutative diagram
$$
\xymatrix{
\pi_1(U,0) \ar[r]^{\iota_*}\ar[d]^m & \pi_1(U,0)\ar[d]^m\\
M \ar[r]^{\bar \alpha} & M.
}
$$

The map $\bar \alpha$ induces a map $\tilde \alpha: \tilde U \to \tilde U$, so that the following diagram commutes
$$
\xymatrix{
\tilde U\ar[r]^{\tilde \alpha}\ar[d]^{\tilde H} & \tilde U\ar[d]^{\tilde H}\\
U\ar[r]^\iota & U.
}
$$

Notice that $\ell$ has no fixed points, as a consequence the points over $0$ are not fixed by $\tilde \alpha$.

Let $H: C \to \bP^1$ be the covering obtained from $\tilde H$ where $C$ is a complete complex Riemann surface. The map $\tilde \alpha$ induces an involution $\alpha$ on $C$.
We need to show that $C$ is a hyperelliptic odd covering. First, thanks to \ref{eqn:cond1}, \ref{eqn:compmon} we know that $C$ is odd, and reasoning as in the proof of Proposition \ref{prop:minimaldegree} using the Riemann-Hurwitz formula, we have that $C$ has genus $g$. 

We will now show that the points $P_i \in H^{-1}(\infty)$ are actually fixed by $\alpha$. Since the fibre over $\infty$ is set-theoretically made by the points $P_i$, we need to show $\alpha(P_i)=P_i$ for every $P_i \in H^{-1}(\infty)$. This will prove that $\alpha$ is a hyperelliptic involution, since the $P_i$ are $2g+2$.

The action induced by $\alpha$ on $C$ is compatible with the action induced by $\iota$ on $\bP^1$. Consider the quotient of these actions $H': C' \to \bP^1$. Notice that the quotient of $\bP^1$ along the action induced by $\iota$ is still a $\bP^1$. We summarize these maps in the following commutative diagram, where $\eta$ and $\epsilon$ are the two quotient maps.

$$
\xymatrix{
C\ar[d]^{H}\ar[r]^{\eta} & C'\ar[d]^{H'}\\
\bP^1\ar[r]^{\epsilon} & \bP^1.
}
$$

Now consider the Riemann-Hurwitz formula applied to the covering $H': C' \to \bP^1$. We denote the genus of $C'$ by $g'$, and by $D_\infty$ the ramification divisor over the point $\infty$. Notice that $H'$ has still degree $4g$, and the ramification points are $\epsilon(B_i)=\epsilon(\iota B_i)$, $i=1, \ldots, 2g$. Moreover, the ramification over $\epsilon(B_i)$ is still a $3$-cycle. In total we get
$$2g'-2=-8g+4g+\deg(D_\infty - \supp{D_\infty}).$$
We want to show that $g'=0$, this will prove that $\alpha$ is a hyperelliptic involution. We have the inequality
\begin{equation} \label{eqn:inequality}
    \deg(D_\infty - \supp{D_\infty})=2g'+4g-2 \geq 4g-2.
\end{equation}
Now consider the covering $H'\circ \eta: C \to \bP^1$. This time the degree is $8g$, since $\eta$ is $2:1$. The ramification is as follows:
\begin{itemize}
\item At the point $0$ with contribution $4g$ to the Riemann-Hurwitz formula.
\item At the point $\infty$ with contribution $\deg(D_\infty - \supp{D_\infty})$ to the Riemann-Hurwitz formula.
\item At the points $B_i$, $i=1, \ldots, 2g$, each with contribution $4$ to the Riemann-Hurwitz formula, $2$ for the $3$-cycle and $2$ for the map $\eta$.
\end{itemize}
In total we get
$$2g-2=-16g + 4\cdot 2g + 4g + \deg(D_\infty - \supp{D_\infty})$$
Now assume that $k$ points among the $2g-2$ over $\infty$ are fixed. This allows us to explicitly write
\begin{equation}
    \deg(D_\infty - \supp{D_\infty})=\sum_{i=1}^k{4n_i+1}+\sum_{i=k+1}^{2g+2}{2n_i}
\end{equation}
Now remember that $\sum_{i=1}^{2g+2} n_i=g-1$. The only possibility for the value of $k$ in order to fulfill  (\ref{eqn:inequality}) is $2g+2$. This proves that $\alpha$ must fix all the point in the fibre over $\infty$, and consequently that the genus of $C'$ is zero.
\end{proof}

\begin{remark}
Proposition \ref{prop:topological} and Theorem \ref{thm:Wsolutions} give existence results for hyperelliptic odd coverings from two different perspectives.
The fact that a theta characteristic and the relative solution of \eqref{main_equation} correspond to a monodromy data is a well-known fact observed first by Serre in \cite{MR1078120} and by Fried in \cite{MR2735035}. In our case, the monodromy data also fix a section of the theta characteristic composed of Weierstrass points.
\end{remark}

\section{The number of hyperelliptic odd coverings} \label{sec:numbrhyp}
In this section, we compute the number of hyperelliptic odd covering $C \to \bP^1$ of degree $4g$ when the curve $C$ is a general element of $H_g$. 
We will call $\cH^{\text{HOC}}_g$ the Hurwitz space of hyperelliptic odd covering of genus $g$ and degree $4g$ modulo automorphisms of $\bP^1$ and we consider the forgetful map
$$\Phi:\cH^{\text{HOC}}_g \to H_g.$$
First at all from the results of the previous sections we have the following:

\begin{corollary} \label{cor:finitelymany}
The map $\Phi$ is generically finite, in particular, its differential $\dd \Phi$ is generically injective. 
\end{corollary}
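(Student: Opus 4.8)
The plan is to deduce the statement from a dimension comparison combined with the dominance of $\Phi$. Concretely, I would show that $\cH^{\text{HOC}}_g$ and $H_g$ both have dimension $2g-1$ and that $\Phi$ is dominant. A dominant morphism between irreducible varieties of the same dimension over $\bC$ is automatically generically finite, and in characteristic zero generic smoothness then forces $\dd\Phi$ to be an isomorphism — in particular injective — on a dense open subset. So the whole corollary reduces to pinning down $\dim \cH^{\text{HOC}}_g$.

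To compute $\dim \cH^{\text{HOC}}_g$ I would use the standard description of Hurwitz spaces as finite covers of configuration spaces of branch points. By Proposition \ref{prop:minimaldegree}, a hyperelliptic odd covering of minimal degree $4g$ with general source has branch locus $\{\infty, B_1, \dots, B_{2g}, \iota B_1, \dots, \iota B_{2g}\}$ with the $B_i \in \bP^1 \setminus \{0,\infty\}$ distinct and satisfying $\iota B_i \neq B_j$, and prescribed local monodromy ($3$-cycles at the $B_i,\iota B_i$ and the cycle type at $\infty$ dictated by $F$ via condition \ref{eqn:compmon}). By the Riemann existence theorem the coverings with fixed branch locus and fixed monodromy type form a finite set which organises into a finite cover as the branch points move; hence $\cH^{\text{HOC}}_g$ is finite over the space of admissible configurations $\{B_1,\dots,B_{2g}\}$. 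That configuration space is an open subset of $\operatorname{Sym}^{2g}(\bC\setminus\{0\})$, of dimension $2g$, and we quotient by the one-dimensional group $z \mapsto \lambda z$ of automorphisms of $\bP^1$ fixing $0$ and $\infty$ (those compatible with $\iota$ and with the distinguished roles of $Z_0,Z_\infty$), giving $\dim \cH^{\text{HOC}}_g = 2g-1$. On the other hand $H_g$, the moduli of $2g+2$ points of $\bP^1$ modulo $\aut(\bP^1)$, also has dimension $(2g+2)-3 = 2g-1$.

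Dominance of $\Phi$ is already available: by Theorem \ref{thm:Wsolutions} (equivalently Proposition \ref{prop:topological}), for a hyperelliptic $C$ and any admissible $F$ the equation \eqref{main_equation} admits a nonzero solution, hence a point of the fibre $\Phi^{-1}(C)$. Restricting to any irreducible component of $\cH^{\text{HOC}}_g$ that dominates $H_g$, this component has dimension at most $2g-1$ yet surjects onto the irreducible $(2g-1)$-dimensional $H_g$, so it has dimension exactly $2g-1$ and $\Phi$ is generically finite on it; components not dominating $H_g$ map into a proper closed subset and so miss the fibre over a general point. Therefore $\Phi$ is generically finite — equivalently, for general $C$ the locus $\Theta=\bigcap_{i=1}^{2g}\cQ_i$ of $2g$ quadrics in $\bP^{2g}$, and hence each $\cH_C(F)$, is finite, matching Remark \ref{rmk:quadrics}. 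Finally, since we work over $\bC$, generic smoothness applied to each dominating component provides a dense open on which $\Phi$ is smooth; equality of dimensions makes it étale there, so $\dd\Phi$ is an isomorphism, and in particular injective, generically. The step that genuinely requires care — and the main technical obstacle — is the finiteness of the map from $\cH^{\text{HOC}}_g$ to the configuration space together with the bookkeeping of the residual $\aut(\bP^1)$-quotient; once $\dim \cH^{\text{HOC}}_g = \dim H_g$ is established, the remainder is formal.
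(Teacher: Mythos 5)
Your proof is correct and follows essentially the same route as the paper: dominance of $\Phi$ from the existence results, the branch-point dimension count $\dim \cH^{\text{HOC}}_g = 2g-1 = \dim H_g$ (the paper gets this by citing the reasoning in Proposition \ref{prop:minimaldegree}, which is exactly your configuration-space count), and then generic smoothness in characteristic zero to conclude that $\dd \Phi$ is generically injective. The one detail to tighten is the dominance step: a nonzero solution of \eqref{main_equation} from Theorem \ref{thm:Wsolutions} a priori gives a covering of degree at most $4g$, so you must invoke Proposition \ref{prop:minimaldegree} (as the paper does explicitly) to conclude that for general $C$ the degree is exactly $4g$ and the solution really yields a point of $\Phi^{-1}(C) \subset \cH^{\text{HOC}}_g$.
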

\begin{proof}
Fix $C$ general. Theorem \ref{thm:Wsolutions} gives the existence of hyperelliptic odd coverings of degree lesser than or equal to $4g$, and Proposition \ref{prop:minimaldegree} shows that this degree must be $4g$. Then the general fibre of $\Phi$ is non-empty, hence $\Phi$ is dominant. By the same reasoning as in the proof of Proposition \ref{prop:minimaldegree}, we have that the dimension on the moduli is $2g-1$, the same as the dimension of $H_g$. This show that once we fix $C$ general in $H_g$, the number of hyperelliptic covering $C \to \bP^1$ which gives a solution of \eqref{main_equation} is finite. By the generic smoothness theorem, see \cite[Corollary 10.7]{MR0463157}, $\dd \Phi$ is generically injective. This concludes the proof.
\end{proof}

\subsection{Deformations of $H$ in the Hurwitz space.}
Consider the infinitesimal deformations of the map $H$ defined via the following short exact sequence, see \cite{MR2583634} and \cite{MR2247603}.

$$\xymatrix{
0\ar[r] & \cT_C\ar[r] & H^* \cT_{\bP^1}\ar[r] & \cN_H \ar[r] & 0.
}$$

We need to study the elements of $H^0(\cN_H)$ preserving the structure of hyperelliptic odd coverings.
Recall that we fixed the involution $\iota:\bP^1 \to \bP^1$ to be the multiplication by $-1$ on $\bC$, and that $H:C \to \bP^1$ is a hyperelliptic odd covering of degree $4g$ with $C$ general in $H_g$. 
Let the ramification points of $H$ be $P_1, \dots, P_{2g+2}$ over $\infty$ and $R_1, \dots, R_{2g}$, $\sigma R_1, \dots, \sigma R_{2g}$ the other $4g$ points. The following lemma is well known.

\begin{lemma} \label{lem:orderram}
Let $\bT \subset H^0(\cN_H)$ be the space of infinitesimal deformations which preserve the order of the ramification points of $H$. We have an isomorphism
$$\bT \cong \bigoplus_{S \text{ ramification}} \frac{\sO_C(-(r_S-1)S)}{\sO_C(-(r_S-2)S)},$$
where the sum ranges over all the ramification points $S$ of $H$, and $r_S$ is the order of $S$.
\end{lemma}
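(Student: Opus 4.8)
The plan is to reduce the statement to a purely local computation at each ramification point, exploiting the fact that $\cN_H$ is a skyscraper sheaf. First I would record that $\dd H:\cT_C \to H^*\cT_{\bP^1}$ is injective as a map of sheaves, since $H$ is non-constant, so that its cokernel $\cN_H$ is a torsion sheaf supported exactly on the ramification divisor $R=\sum_S (r_S-1)S$. Consequently $H^0(\cN_H)=\bigoplus_S \cN_{H,S}$, and it suffices to analyse each stalk and the subspace it contributes to $\bT$. Near a ramification point $S$ of order $r_S$ I would pick local coordinates $z$ on $C$ and $w$ on $\bP^1$ with $H$ given by $w=z^{r_S}$; then $\dd H(\partial_z)=r_S\,z^{r_S-1}\partial_w$, so that $\cN_{H,S}\cong \cO_{C,S}/(z^{r_S-1})$, with classes $[z^k\partial_w]$ for $0\le k\le r_S-2$ as a $\bC$-basis. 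In particular $\operatorname{length}\cN_{H,S}=r_S-1$, which is the standard local description.

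The heart of the argument is to isolate, inside this $(r_S-1)$-dimensional stalk, the one-dimensional subspace $\bT_S$ of deformations keeping the ramification order equal to $r_S$. I would represent a first-order deformation of $H$ near $S$ by $w_\varepsilon(z)=z^{r_S}+\varepsilon\sum_{k=0}^{r_S-2}c_k z^k$, matching the basis above. The profile is preserved to first order precisely when $w_\varepsilon'(z)=r_S z^{r_S-1}+\varepsilon\sum_{k\ge 1}k c_k z^{k-1}$ still has a single zero of multiplicity $r_S-1$, after absorbing a reparametrization of the source coordinate $z$ (which is exactly the $\cT_C$-freedom already quotiented out in forming $\cN_H$). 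Imposing $w_\varepsilon'(z)=r_S(z-\varepsilon a)^{r_S-1}$ to first order and comparing coefficients, the coefficient of $z^{r_S-2}$ forces $a=0$, and then $c_1=\dots=c_{r_S-2}=0$; only the constant $c_0$ survives. Equivalently, if the lowest nonzero $c_j$ has index $j\ge 1$ one sees directly that $w_\varepsilon'$ acquires $r_S-j$ distinct small zeros, so the ramification splits. Thus $\bT_S=\bC\cdot[\partial_w]$ is one-dimensional and is the extremal graded piece of $\cN_{H,S}$, corresponding geometrically to shifting the branch value while leaving the profile untouched.

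Finally I would assemble $\bT=\bigoplus_S \bT_S$ and identify each $\bT_S$ intrinsically. Realizing $\cN_H$ through the isomorphism $H^*\cT_{\bP^1}\cong \cT_C(R)$ as the quotient $\cT_C(R)/\cT_C$, the generator $[\partial_w]$ corresponds to the most polar element $[z^{-(r_S-1)}\partial_z]$, i.e.\ to the top graded piece of the order filtration on the stalk; this is the quotient $\sO_C(-(r_S-1)S)/\sO_C(-(r_S-2)S)$ displayed in the statement, and summing over $S$ gives the claimed isomorphism. I expect the main obstacle to be precisely the local first-order computation of the previous paragraph, namely showing that preserving the ramification order annihilates every coefficient except the constant term; the rest is standard deformation theory of maps together with bookkeeping for the skyscraper $\cN_H$. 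A secondary point worth checking is the naturality (coordinate independence) of the identification of $\bT_S$ with the displayed graded quotient, so that the local isomorphisms glue into a global one.
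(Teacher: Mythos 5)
Your proof is correct and follows the same overall strategy as the paper's: since $\dd H$ is injective, $\cN_H$ is a skyscraper supported on the ramification points, so everything reduces to a first-order computation in local coordinates $w=z^{r}$ at each ramification point $S$. But your key local step is sharper than, and in one place actually corrects, the paper's own argument. The paper asserts that the deformations preserving the ramification order are those with $\eta(z)=\alpha r z^{r-1}$, i.e. $z^r+\epsilon\eta(z)=(z+\alpha\epsilon)^r$; but these are precisely $\dd H(\alpha\,\partial/\partial z)$, hence they represent the \emph{zero} class in $\cN_{H,S}$ (they are the source reparametrizations already quotiented out in forming $\cN_H$), and taken literally this would give $\bT_S=0$. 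Your computation identifies the correct generator: inside $\cN_{H,S}\cong\cO_{C,S}/(z^{r-1})$ the profile-preserving classes form the line spanned by the constant $[\partial_w]$ (the branch value moves, the local profile does not), and the full preimage of this line among local sections of $H^*\cT_{\bP^1}$ is $\bC\cdot 1+\bC\, z^{r-1}+z^{r}\cO_{C,S}$, which reconciles the two pictures: moving the ramification point and moving the branch value differ by a reparametrization, and only the latter survives in the normal sheaf. This correction is harmless for the paper's later use of the lemma (e.g.\ in the proof that $\bT_\infty$ is anti-invariant, the constant representative $\partial_t$ transforms with the same sign $-1$ as $z^{2k}\partial_t$), but it is the version one should keep.

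Two small points to tidy up in your write-up. First, when you impose $w_\epsilon'(z)=r(z-\epsilon a)^{r-1}$ you should a priori allow a unit factor $(z-\epsilon a)^{r-1}\bigl(r+\epsilon u(z)\bigr)$; since your representative $\eta$ has degree at most $r-2$, the unit contributes only in degrees at least $r-1$ and is forced to vanish, so your conclusion stands, but the remark is worth making. Second, as you implicitly noticed, the displayed quotient $\sO_C(-(r_S-1)S)/\sO_C(-(r_S-2)S)$ does not parse under the standard sign convention (the first sheaf is contained in the second); your reading of it as the top graded piece of the pole-order filtration on $\cT_C(R)/\cT_C$, generated by the class of $z^{-(r_S-1)}\partial_z\sim\partial_w$, is the correct interpretation and is the one consistent with how $\cV$ is used afterwards.
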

\begin{proof}
Consider a ramification point $S$ of order $r_S=r$. The map $H$ can be locally written around $S$ as $z \mapsto z^r=t$. In such coordinates the differential of $H$, which is a map $T_C \to H^* \cT_{\bP^1}$, can be written as $\partial / \partial z \mapsto (r-1)z^{r-1} \partial / \partial t$. We want to deform $S$ while keeping the order of ramification fixed. If we trivialize we get that the infinitesimal deformation correspond to the image if the following map:
$$\frac{\sO_C(-(r-1)S)}{\sO_C(-(r-2)S)} \hookrightarrow \frac{\sO_C(-(r-1)S)}{\sO_C} \cong \cN_H|_S.$$
Explicitly,  all the deformations are of the form $z^r+\epsilon \eta(z)$, with $\epsilon^2=0$. If we want to preserve the order of ramification, we have to consider $\eta(z)=\alpha rz^{r-1}$, so that $z^r+\epsilon \eta(z)=(z+\alpha \epsilon)^r$.
\end{proof}

There is a natural involution on $H^0(\cN_H)$ which we will call $\iota_*$. The infinitesimal variations which preserves the structure of hyperelliptic odd covering must be $\iota_*$-invariant. In order to describe this space, let $\cV$ be the space of deformation over the points $R_1, \dots, R_{2g}$ and define
\begin{equation} \label{eqn:somma}
\cV:=\bigoplus_{i=1}^{2g} \frac{\sO_C(-R_i)}{\sO_C(-2R_i)}.
\end{equation}

\begin{lemma} \label{lem:decomposeT}
We can decompose $\bT$ according to the ramification locus of $H$: 
$$\bT \cong \cV \oplus \iota_* \cV \oplus \bT_\infty,$$
where $\bT_\infty$ contains the infinitesimal deformations of the Weierstrass points $P_1, \dots, P_{2g+2}$.
The deformations on $\bT$ which are $\iota_*$-invariant, $\bT^{(\iota)}$, can be described with respect to this decomposition as the diagonal in $\cV \oplus \iota_* \cV$. More precisely the map $v$ defined as
\begin{align*}
v: \cV &\to \bT\\
\alpha &\mapsto (\alpha, \iota_* \alpha ,0),
\end{align*}
has image $\bT^{(\iota)}$.
\end{lemma}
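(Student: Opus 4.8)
The plan is to treat the asserted decomposition $\bT\cong\cV\oplus\iota_*\cV\oplus\bT_\infty$ as a regrouping of the one-dimensional summands produced by Lemma \ref{lem:orderram}, and then to pin down the action of $\iota_*$ on each block; the only genuine computation is a local one at the Weierstrass points. The starting observation is that, since $H$ is a hyperelliptic odd covering, $H\circ\sigma=\iota\circ H$, so $(\sigma,\iota)$ is an automorphism of the covering. It acts on $H^0(\cN_H)$ by $H'\mapsto\iota\circ H'\circ\sigma$, and this is the involution $\iota_*$ appearing in the statement. Since $(\sigma,\iota)$ carries ramification points to ramification points of the same order — it exchanges $R_i$ and $\sigma R_i$ (both of order $3$) and fixes each Weierstrass point $P_j$ (of order $2n_j+1$) — it preserves $\bT$, so $\iota_*$ is an involution of $\bT$.

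First I would record the decomposition. By Lemma \ref{lem:orderram}, $\bT$ is the direct sum of one-dimensional skyscrapers $\bT_S$, one for each ramification point $S$. Splitting these according to the three families $\{R_i\}$, $\{\sigma R_i\}$, $\{P_j\}$ gives $\bT\cong\big(\bigoplus_i\bT_{R_i}\big)\oplus\big(\bigoplus_i\bT_{\sigma R_i}\big)\oplus\bT_\infty$, the first summand being $\cV$ by \eqref{eqn:somma} and $\bT_\infty:=\bigoplus_j\bT_{P_j}$. As $(\sigma,\iota)$ sends $R_i$ to $\sigma R_i$, the involution $\iota_*$ carries $\bT_{R_i}$ isomorphically onto $\bT_{\sigma R_i}$; hence the middle summand is exactly $\iota_*\cV$, which establishes the decomposition and shows that on $\cV\oplus\iota_*\cV$ the map $\iota_*$ interchanges the two factors.

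The heart of the matter is the computation of $\iota_*$ on $\bT_\infty$, and this is where I expect the real work. Fix $P_j$, a local coordinate $z$ on $C$ centred there with $\sigma(z)=-z$, and near $\infty$ the coordinate $u=1/w$, so that $\iota(u)=-u$ and $H$ is locally $u=z^{r}$ with $r=2n_j+1$ odd. Representing a deformation by $z\mapsto z^{r}+\epsilon\,\phi(z)$ and transporting it by $(\sigma,\iota)$ yields $\iota\circ\big(z\mapsto z^{r}+\epsilon\phi(z)\big)\circ\sigma$, that is $z\mapsto z^{r}-\epsilon\,\phi(-z)$; here the oddness $(-1)^{r}=-1$ is exactly what keeps the leading term equal to $z^{r}$, so the transport is again a deformation of $H$. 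Thus $\iota_*$ sends the class of $\phi(z)$ to that of $-\phi(-z)$, and since the order-preserving line $\bT_{P_j}$ is spanned by a class represented by an even function of $z$ (the constant section pulled back from a generator of $\cT_{\bP^1}$ at $\infty$), $\iota_*$ acts on it by $-1$. Consequently $\iota_*=-\mathrm{id}$ on $\bT_\infty$ and $\bT_\infty^{(\iota)}=0$.

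It then remains only to assemble. Writing a general element as $a+a'+t$ with $a\in\cV$, $a'\in\iota_*\cV$, $t\in\bT_\infty$, invariance under $\iota_*$ forces $t=0$ (because $\iota_*t=-t$) and $a'=\iota_*a$ (because $\iota_*$ swaps the first two factors), so $\bT^{(\iota)}=\{a+\iota_*a:a\in\cV\}$, which is precisely the image of $v$. The main obstacle is the local sign computation at the Weierstrass points: the oddness $r=2n_j+1$ plays a double role, simultaneously guaranteeing that $\iota\circ H'\circ\sigma$ is a legitimate first-order deformation of $H$ and forcing the eigenvalue $-1$, thereby eliminating every $\iota_*$-invariant deformation supported over $\infty$. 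Once this sign is settled, the decomposition and the identification of $\bT^{(\iota)}$ with the image of $v$ are routine.
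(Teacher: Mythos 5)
Your proof is correct and follows essentially the same route as the paper: the decomposition is a regrouping of the one-dimensional summands from Lemma \ref{lem:orderram}, and the decisive step is the local sign computation at the Weierstrass points showing $\iota_*$ acts by $-1$ on $\bT_\infty$ (your transport computation $\phi(z)\mapsto-\phi(-z)$ on the even generator $z^{2n_j}$ is exactly the paper's $\iota_*(z^{2k}\partial/\partial t)=(-z)^{2k}(-1)\partial/\partial t=-z^{2k}\partial/\partial t$). Your write-up merely makes explicit some points the paper treats as clear, such as $\iota_*$ exchanging $\bT_{R_i}$ and $\bT_{\sigma R_i}$.
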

\begin{proof}
Clearly the image of $v$ is $\bT^{(\iota)} \cap (\cV \oplus \iota_* \cV)$. Next we show that the component $\bT_\infty$ is anti-invariant. For a Weierstrass point $P_i$ with ramification of order $2k+1$, locally the deformations are $z^{2k} \partial / \partial t$. Around $P_i$, the involution $\iota$ lifts as the multiplication by $-1$ on the coordinate $z$, and $\iota_* (\partial / \partial t) = - \partial / \partial t$. As a consequence:
$$\iota_*(z^{2k}\partial/ \partial t)=(-z)^{2k}(-1)\partial / \partial t=-z^{2k}\partial / \partial t.$$
\end{proof}

Finally, we quotient out by the automorphisms of $\bP^1$. This is done by considering the following short exact sequence:
\[\xymatrix{
0\ar[r] & H^0(H^* \cT_{\bP^1})\ar[r] & H^0(\cN_H) \ar[r]^-{p} & \cQ \ar[r] & 0.
}\]
To avoid the quotient $\cQ$, we can represent the elements of $\bT^{(\iota)}$ in a non-canonical way by fixing $H(R_1)=1$, $H(\sigma R_1)=-1$. So we can consider the sum in \eqref{eqn:somma}  starting from $i=2$:
$$\overline \cV:=\bigoplus_{i=2}^{2g} \frac{\sO_C(-R_i)}{\sO_C(-2R_i)}.$$
We have $\overline \cV \hookrightarrow H^0(\cN_H)$ by setting zero also the deformations over $R_1$ and $\sigma R_1$. In this way we have an isomorphism $p (\overline \cV) \cong \cQ$.

\begin{proposition} \label{prop:defeqn}
Let $H: C \to \bP^1$ be a hyperelliptic odd covering with $C$ general in $H_g$. Every infinitesimal deformation of $H$ which keeps $C$ constant is zero (up to infinitesimal deformations of $\bP^1$).
\end{proposition}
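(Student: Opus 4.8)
The plan is to prove the sharper statement that the differential of the forgetful map is injective at $H$, i.e.\ that $\ker \dd\Phi=0$. By Lemma \ref{lem:decomposeT}, once the one--dimensional $\aut(\bP^1)$--ambiguity is removed by the normalization $H(R_1)=1$, every infinitesimal deformation of $H$ preserving the hyperelliptic odd covering structure is represented uniquely by an element of $\overline{\cV}$, and the induced deformation of $C$ is measured by the connecting homomorphism $\delta\colon \overline{\cV}\to H^1(C,\cT_C)$ of the normal bundle sequence $0\to \cT_C\to H^{*}\cT_{\bP^1}\to \cN_H\to 0$. Its kernel is exactly the space of deformations keeping $C$ constant, so the proposition is equivalent to the injectivity of $\delta$. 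Since the deformations in $\overline{\cV}$ commute with $\sigma$, the image of $\delta$ lands in the $\sigma$-invariant (hyperelliptic) part $H^1(C,\cT_C)^{+}$, and a dimension count gives $\dim \overline{\cV}=2g-1=\dim H^1(C,\cT_C)^{+}$; hence injectivity is equivalent to $\delta$ being an isomorphism onto the hyperelliptic deformations.

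To decide the injectivity I would translate it back into the language of Section \ref{sec:oddhyperell}. A deformation fixing $C$ moves the solution $f$ of \eqref{main_equation} inside the fixed space $V$ while remaining a solution, so $\ker \dd\Phi$ is canonically the tangent space $T_{[f]}\Theta$ at the corresponding point of the quadric intersection. Differentiating $\Psi(f)=[f^{2}\omega]$ one finds $T_{[f]}\Theta=\ker L_f/\bC f$, where $L_f\colon V\to H^1(C,\bC)$ is the linear map $g\mapsto [fg\omega]$; note that $f$ itself lies in $\ker L_f$ by \eqref{main_equation}. Because $\dim V=2g+1$ and $\dim H^1(C,\bC)=2g$, the vanishing $T_{[f]}\Theta=0$ that we want is exactly the surjectivity of $L_f$, equivalently $\ker L_f=\bC f$. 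Dually, under Serre duality this is the statement that the ramification points $R_2,\dots,R_{2g}$ impose independent residue conditions on quadratic differentials, which is how one sees the injectivity of $\delta$ directly.

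The main obstacle is the surjectivity of $L_f$, and this is the only place where the generality of $C$ is essential: for special ramification there is no reason for $L_f$ to have minimal kernel, but for general $C$ the points $R_i$ move in $C$ as $C$ varies, by the dominance of $\Phi$ established in Proposition \ref{prop:topological} and Corollary \ref{cor:finitelymany}. Since surjectivity of $L_f$ is an open condition on $(C,[f])$ and the fibre of $\Phi$ over a general $C$ is finite, it suffices to exhibit a single curve on which it holds. On the hyperelliptic model $y^{2}=p(x)$ this becomes a transparent period computation: writing $\omega=\tfrac{Q(x)}{y}\,\dd x$ with $\deg Q=g-1$ and $f=\tfrac{t(x)}{yQ(x)}$ with $\deg t\le 2g$, one gets $f\omega=\tfrac{t(x)}{p(x)}\,\dd x$ and $fg\omega=\tfrac{s(x)t(x)}{Q(x)p(x)}\,\tfrac{\dd x}{y}$, and one checks that as $g$ ranges over $V$ the classes $[fg\omega]$ span the $2g$-dimensional $H^1(C,\bC)$ for a suitable explicit $p$. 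By semicontinuity the surjectivity then propagates to general $C$ and, the fibre being finite, to every solution $f$ lying over it; this yields $\ker \dd\Phi=0$ and, as a by-product, the smoothness of every point of $\Theta$ needed for the count in Theorem \ref{thm:number}.
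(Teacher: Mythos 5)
Your reduction of Proposition \ref{prop:defeqn} to the surjectivity of $L_f\colon V\to H^1(C,\bC)$, $v\mapsto[fv\omega]$, is sound, and it is in effect the paper's Section \ref{sec:numbrhyp} run in reverse: the paper deduces $T_{[f]}\Theta=0$ (Propositions \ref{prop:lambdasol} and \ref{prop:thetasmooth}) \emph{from} Proposition \ref{prop:defeqn}, whereas you want to deduce Proposition \ref{prop:defeqn} from an independent proof of $T_{[f]}\Theta=0$. The gap is that this independent proof is never actually given. The sentence ``one checks that as $g$ ranges over $V$ the classes $[fg\omega]$ span $H^1(C,\bC)$ for a suitable explicit $p$'' is a placeholder for the entire difficulty: the map $L_f$ only makes sense at a point $[f]\in\Theta$, and in your model $f=t(x)/(yQ(x))$ is not free --- it must itself solve \eqref{main_equation}, i.e. $t(x)^2/(Q(x)p(x))\,\dd x/y$ must be exact. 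So to run the period computation you would need an explicit hyperelliptic curve of genus $g$ \emph{together with} an explicit solution of the differential equation on it. The paper never produces such a solution for $g\geq 2$ (the existence in Theorem \ref{thm:Wsolutions} is a purely cohomological dimension count, and even the elliptic solutions of Section \ref{sec:elliptic} are only ``almost explicit''), and producing one is essentially as hard as the problem you are trying to solve.

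The globalization step is also flawed. Surjectivity of $L_f$ is indeed an open condition on pairs $(C,[f])$, but a single example only yields the property on a dense open subset of the irreducible component of $\cH^{\text{HOC}}_g$ containing it. Proposition \ref{prop:defeqn} concerns \emph{every} covering with source a general $C$, i.e. every point of $\Phi^{-1}(C)$, and these points could a priori lie on several components dominating $H_g$; finiteness of the fibre does not collapse them into one component, and irreducibility of $\cH^{\text{HOC}}_g$ is not known (the paper explicitly leaves the count of its components open in its final remark). Both gaps are repaired at once by using the full strength of Corollary \ref{cor:finitelymany}, which you cite only for dominance: generic smoothness applied to the dominant map $\Phi$ between $(2g-1)$-dimensional spaces gives injectivity of $\dd \Phi$ at \emph{all} points of the fibre over a general $C$, uniformly across components. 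That is exactly the paper's two-line proof; your Kodaira--Spencer framing $\delta\colon\overline{\cV}\to H^1(C,\cT_C)^{+}$ with the matching dimension count is compatible with it, and once you invoke that corollary the whole detour through an explicit curve becomes unnecessary.
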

\begin{proof}
We have a map $\overline \cV \to H^1(\cT_C)$. The image of this map is $\sigma$-invariant, so we get a morphism $\overline \cV \to \cT_{H_g}$ which by construction of $\cV$ is the differential of the map $\Phi$.
From Corollary \ref{cor:finitelymany} the map $\dd \Phi$ is generically injective, so the kernel of $\overline \cV \to \cT_{H_g}$ is zero. This concludes the proof.
\end{proof}

\subsection{Deformation of $H$ as a solution of \eqref{main_equation}}

Let $f, h \in V$ such that $\dd h=f^2 \omega$ and giving $C$ as the covering curve over $\bP^1$. We want to consider a first order deformation of $h$ which is still a solution of \eqref{main_equation}, let $h_1$, $f_1$ in $V$ such that
\begin{equation} \label{eqn:edef}
\dd (h + \epsilon h_1) = (f+\epsilon f_1)^2 \omega \qquad \mod \epsilon^2.
\end{equation}
This gives us the following system
\begin{equation} \label{eqn:sistema}
    \begin{cases}
    \dd h=f^2 \omega\\
     \dd h_1 = 2 f f_1 \omega.
    \end{cases}
\end{equation}

A solution of this system can be associated with a deformation of the map $H: C \to \bP^1$.

\begin{lemma}
The deformation  \eqref{eqn:edef} gives an element which belongs to the image of the map $v: \cV \to \bT$ defined in Lemma \ref{lem:decomposeT}.
\end{lemma}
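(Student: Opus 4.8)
The plan is to read off the section of $\cN_H$ determined by the deformation \eqref{eqn:edef} and to verify two things about it: that it lies in $\bT$ (it preserves every ramification order) and that it is $\iota_*$-invariant. Granting both, Lemma \ref{lem:decomposeT} closes the argument: since $\bT_\infty$ was shown there to be anti-invariant, any $\iota_*$-invariant element of $\bT = \cV \oplus \iota_* \cV \oplus \bT_\infty$ automatically has vanishing $\bT_\infty$-component and has its $\cV$- and $\iota_*\cV$-components interchanged by $\iota_*$, i.e.\ is of the form $(\alpha, \iota_*\alpha, 0) = v(\alpha)$. So it suffices to establish membership in $\bT$ and $\iota_*$-invariance.

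First I would describe the normal-bundle class explicitly. In the affine chart $w$ on $\bP^1$ the map $H$ is the function $h$, so $h + \epsilon h_1$ has velocity $h_1\,\partial/\partial w \in H^0(H^*\cT_{\bP^1})$, whose image in $\cN_H$ is the class to be analysed. At the finite branch points I would use the system \eqref{eqn:sistema} together with $(\omega) = 2F$, which is supported on the Weierstrass points: since $\omega$ is non-vanishing at each $R_i$ and $\sigma R_i$, the relation $\dd h = f^2\omega$ forces $f$ to have a simple zero there, so the deformed $f + \epsilon f_1$ still has a simple zero at a nearby point and $(f+\epsilon f_1)^2\omega$ keeps a double zero. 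Hence the $3$-cycle ramification is preserved and the class there is exactly an element of $\cV \oplus \iota_*\cV$.

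The delicate point, which I expect to be the main obstacle, is the behaviour over $\infty$, i.e.\ the vanishing of the $\bT_\infty$-component. Here I would pass to the coordinate $s = 1/w$ near $\infty$ and compute the velocity of $s_\epsilon = 1/(h+\epsilon h_1)$, which equals $-\,(h_1/h^2)\,\partial/\partial s$. Because $h_1 \in V$ has a pole of order at most $2n_i+1$ at $P_i$ while $h^2$ has a pole of order $4n_i+2$, the function $h_1/h^2$ vanishes to order at least $2n_i+1$ at $P_i$; since the image of $\dd H$ near $P_i$ is $z_i^{2n_i}\cO_C\cdot\partial/\partial s$, the velocity lies in that image and therefore maps to zero in $\cN_H|_{P_i}$. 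Thus the Weierstrass points neither move nor change order, the $\bT_\infty$-component is zero, and in particular the class lies in $\bT$. It is precisely this pole-order bookkeeping that uses the constraint $h_1 \in V$ essentially.

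Finally I would check $\iota_*$-invariance. Since $h$ and $h_1$ both lie in $V$ they are $\sigma$-anti-invariant, so $(h+\epsilon h_1)\circ\sigma = -(h+\epsilon h_1)$; equivalently the deformed map still satisfies $H_\epsilon\circ\sigma = \iota\circ H_\epsilon$ modulo $\epsilon^2$, which is exactly the statement that its class is fixed by $\iota_*$. Combining this with the reduction in the first paragraph places the deformation in the image of $v$, as claimed.
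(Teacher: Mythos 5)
Your proof is correct and follows essentially the same route as the paper's: vanishing of the $\bT_\infty$-component from the pole-order bound on $h_1 \in V$ against the order-$(4n_i+2)$ pole of $h^2$ at each $P_i$, preservation of the order-$3$ ramification at the $R_i$ from the deformed equation, and $\iota_*$-invariance from the anti-invariance of $h$ and $h_1$. The only cosmetic differences are that you phrase the computation at the $R_i$ as persistence of the simple zero of $f$ over the dual numbers (the paper compares coefficients in \eqref{eqn:sistema} to get $a_1=0$), and that you route the final step formally through Lemma \ref{lem:decomposeT}.
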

\begin{proof}
Notice first that this deformation does not affect the Weierstrass points: consider appropriate local coordinates around a certain $P_i$ with order of ramification $\alpha_i$ such that the deformation of $h$ can be written as
$$z^{-\alpha_i} + \epsilon h_1(z)=z^{-\alpha_i}\left(1+\epsilon z^{\alpha_i} h_1(z)\right).$$
We change the coordinates on $\bP^1$ in the following way:
$$\frac{1}{z^{-\alpha_i}\left(1+\epsilon z^{\alpha_i} h_1(z)\right)}=\frac{z^{\alpha_i}}{1+\epsilon z^{\alpha_i} h_1(z)} = z^{\alpha_i}(1-\epsilon z^{\alpha_i} h_1(z))=z^{\alpha_i} - \epsilon z^{2\alpha_i} h_1(z).$$
The polynomial which gives the deformation is $z^{2\alpha_i} h_1(z)$, but every monomial has degree greater than $\alpha_i$, hence $h_1=0$. This shows that the component in the space $\bT_\infty$ is $0$.

Now we want to consider the ramification points outside $\infty$, which in the minimal degree case are the $4g$ points $R_i$ or order $3$. Again, consider appropriate local coordinates around the point $R_i$ such that $h(z)=z^3$ and $f(z)=z$. In the same coordinates let $h_1(z)=a_1z+a_2z^2$ and $f_1=b_0+b_1z+b_2z^2$. Recall that outside the poles $\omega=\dd z$, hence we get
\begin{align*}
\dd h_1 &= 2 f f_1 \omega\\
(a_1+2a_2z)\dd z &= 2z(b_0+b_1z+b_2z^2)\dd z\\
a_1 + 2a_2z &= 2 b_0 z +2 b_1 z^2 + 2 b_2 z^3.
\end{align*}
From which we get that $a_1=0$, and so $h_1(z)=a_2z^2$ preserves the ramification of order $3$ in $R_i$. The fact that $h$ and $h_1$ are anti-invariant with respect to $\iota$ shows that the image in $\cV \oplus \iota_* \cV$ agrees with the description of the map $v$.
\end{proof}

\begin{proposition} \label{prop:lambdasol}
If $C$ is general in $H_g$, then $h_1=\lambda h$ and $f_1=\lambda/2 f$ for $\lambda \in \bC$.
\end{proposition}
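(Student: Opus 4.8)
The plan is to reduce the statement to the rigidity result of Proposition \ref{prop:defeqn}. The pair $(h_1,f_1)$ solves the linearized system \eqref{eqn:sistema}, in which the curve $C$ and the form $\omega$ with $(\omega)=2F$ are held fixed; thus \eqref{eqn:edef} is an infinitesimal deformation of the covering $H$ that keeps $C$ constant. By the lemma preceding this proposition, this deformation lies in the image of $v:\cV \to \bT$, i.e. it is an $\iota_*$-invariant deformation preserving all the ramification orders. I would therefore view $(h_1,f_1)$ as a class in $\bT^{(\iota)}$ and feed it into the rigidity statement.

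First I would invoke Proposition \ref{prop:defeqn}: since $C$ is general and the deformation keeps $C$ fixed, the class of \eqref{eqn:edef} is zero modulo infinitesimal automorphisms of the target $\bP^1$. Concretely, in the normalization used to define $\overline \cV$ (fixing $H(R_1)=1$ and $H(\sigma R_1)=-1$), the generic injectivity of $\dd\Phi$ from Corollary \ref{cor:finitelymany} forces the image of $(h_1,f_1)$ in $\overline \cV$ to vanish, so the only surviving freedom is a genuine reparametrization of $\bP^1$.

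The key observation is then to pin down which reparametrizations are admissible. An infinitesimal automorphism of $\bP^1$ compatible with a hyperelliptic odd covering must commute with $\iota:z\mapsto -z$ and fix both $0=Z_0$ and $\infty=Z_\infty$; these are exactly the scalings, with infinitesimal generator $\lambda z\,\partial/\partial z$ for $\lambda\in\bC$. Since $h$ is the covering function itself, composing with $z\mapsto(1+\epsilon\lambda)z$ replaces $h$ by $h+\epsilon\lambda h$. Hence the deformation \eqref{eqn:edef} equals this scaling deformation, which gives $h_1=\lambda h$.

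Finally I would substitute back into the second equation of \eqref{eqn:sistema}: using $h_1=\lambda h$ and $\dd h=f^2\omega$ gives $2ff_1\,\omega=\dd h_1=\lambda\,\dd h=\lambda f^2\omega$, so that $2ff_1=\lambda f^2$ as meromorphic functions; dividing where $f\neq 0$ and extending over the whole curve yields $f_1=\tfrac{\lambda}{2}f$, as claimed. I expect the only delicate point to be the third paragraph: carefully matching the abstract vanishing in $\overline \cV$ supplied by Proposition \ref{prop:defeqn} with the explicit residual scaling action on $h$, and verifying that the $\iota$-compatibility and the fixing of $0,\infty$ built into the hyperelliptic odd structure really leave no admissible deformation beyond $h_1=\lambda h$.
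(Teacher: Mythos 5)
Your proposal is correct, and it rests on the same two pillars as the paper's own proof --- the lemma placing the deformation \eqref{eqn:edef} in the image of $v$, and the rigidity supplied by Proposition \ref{prop:defeqn} via Corollary \ref{cor:finitelymany} --- but the final step is executed by a genuinely different route. The paper never identifies the residual automorphism group: it uses the rigidity to deduce local vanishing of $h_1$ at the points $R_i$, then studies the meromorphic function $\dd h_1/\dd h$, checks it is holomorphic (the double zeros of $\dd h = f^2\omega$ at the $R_i$ and $\sigma R_i$ are matched by zeros of $\dd h_1$, and the pole of $\dd h$ at each $P_i$ dominates that of $\dd h_1$ since $h_1 \in V$), concludes it is a constant $\lambda$, and finally kills the integration constant in $h_1 - \lambda h$ by anti-invariance. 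You instead recognize the surviving freedom after Proposition \ref{prop:defeqn} as the scaling direction $t \mapsto (1+\epsilon\lambda)t$ --- the only infinitesimal automorphisms of $\bP^1$ commuting with $\iota$ --- and match the deformation with the scaling action on $h$; this is more transparent geometrically and explains where $\lambda$ comes from, whereas in the paper $\lambda$ appears only as the value of the constant function $\dd h_1/\dd h$. The price is one elided step: the equality of \eqref{eqn:edef} with the scaling deformation is a priori an equality of classes in $H^0(\cN_H)$, and passing from equal classes to the equality of functions $h_1 = \lambda h$ uses $H^0(C,\cT_C)=0$, i.e. that $C$ has no infinitesimal automorphisms. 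This is a one-line addendum for $g \geq 2$, but it fails for $g=1$, where translations give $H^0(\cT_C)\neq 0$; there your argument needs the supplementary observation that a translation direction would add to $h_1$ a term proportional to $\dd h/\dd z = f^2$, which is $\sigma$-invariant and hence incompatible with $h_1 \in V$ --- essentially the same anti-invariance step the paper uses, and the reason its function-theoretic route covers all $g \geq 1$ uniformly. Your concluding substitution $2ff_1\omega = \lambda f^2 \omega$, giving $f_1 = \tfrac{\lambda}{2} f$, coincides with the paper's.
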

\begin{proof}
Consider $\dd h_1 / \dd h$. Thanks to the previous lemma, we can apply Proposition \ref{prop:defeqn} to get that $h_1$ has a zero of order at least $3$ near the points $R_i$. Moreover $h$ has poles of order greater than $h_1$ over the points $P_i$. As a consequence $\dd h_1 / \dd h$ is holomorphic, hence $\dd h_1=\lambda \dd h$ for a certain $\lambda \in \bC$. From $\dd(h_1 - \lambda h)=0$ it follows that also $h_1-\lambda h$ is constant. But since $h_1-\lambda h$ is also anti-invariant it is equal to $0$, hence we have $h_1=\lambda h$. From $\dd h_1=2f f_1 \omega$ it follows that $f_1=\lambda/2 f$. 
\end{proof}

\begin{proposition} \label{prop:thetasmooth}
If $C$ is general, the scheme $\Theta$ composed by solutions of \eqref{main_equation} is smooth.
\end{proposition}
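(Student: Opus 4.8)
The plan is to prove smoothness pointwise, by showing that the Zariski tangent space of $\Theta$ vanishes at every $[f] \in \Theta$. Since $\Theta$ is the zero scheme of the section $\psi = (\psi_1, \ldots, \psi_{2g})$ induced by $\Psi$, cut out by the $2g$ quadrics $\cQ_1, \ldots, \cQ_{2g}$ inside $\bP(V) = \bP^{2g}$, its expected dimension is zero; a point with vanishing tangent space is therefore an isolated reduced point, and if this holds at every point then $\Theta$ is a finite reduced scheme, hence smooth.

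First I would make the tangent space explicit. Recall that $\Theta$ is supported on $\{[f] : \Psi(f)=0\}$ with $\Psi(f) = [f^2\omega]_{\mathrm{DR}}$, so a tangent vector at $[f]$ is represented by some $f_1 \in V$, taken modulo the line $\bC f$, lying in the kernel of the differential $d\Psi_f$. Expanding
\[
\Psi(f + \epsilon f_1) = [f^2\omega]_{\mathrm{DR}} + 2\epsilon\,[f f_1\omega]_{\mathrm{DR}} \pmod{\epsilon^2}
\]
shows $d\Psi_f(f_1) = 2\,[f f_1\omega]_{\mathrm{DR}}$, so the condition $d\Psi_f(f_1)=0$ says exactly that $f f_1\omega$ is exact, i.e.\ that there is $h_1 \in V$ with $dh_1 = 2 f f_1\omega$. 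This is precisely the linearized system \eqref{eqn:sistema}, and hence tangent vectors to $\Theta$ at $[f]$ correspond bijectively to the first-order deformations of $f$ as a solution of \eqref{main_equation}.

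Next I would invoke Proposition \ref{prop:lambdasol}. Because $C$ is general, every such deformation satisfies $f_1 = (\lambda/2)\,f$ for some $\lambda \in \bC$, hence $f_1 \in \bC f$, which is the zero class in $T_{[f]}\bP(V)$. Thus $T_{[f]}\Theta = 0$ for every $[f] \in \Theta$, so each point of $\Theta$ is smooth of dimension zero, and $\Theta$ is smooth.

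The hard part is really the identification in the middle step: translating the abstract tangent space of the quadric intersection into the concrete deformation system \eqref{eqn:sistema}, where one must also check that the potential $h_1$ may be chosen anti-invariant (so that it lands in $V$), which holds because $f f_1\omega$ is $\sigma$-anti-invariant. Once this dictionary is in place the genericity of $C$ enters only through Proposition \ref{prop:lambdasol}, which itself rests on the generic injectivity of $\dd\Phi$ from Corollary \ref{cor:finitelymany}, and the conclusion follows at once.
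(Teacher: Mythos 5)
Your proof is correct and follows essentially the same route as the paper's: identify the tangent space to (the cone over) $\Theta$ at $f$ with the solutions $f_1$ of the linearized equation $\dd h_1 = 2 f f_1 \omega$ from \eqref{eqn:sistema}, then apply Proposition \ref{prop:lambdasol} to conclude $f_1 \in \bC f$, so the projective tangent space vanishes. Your write-up is in fact slightly more careful than the paper's, spelling out the computation of $d\Psi_f$ and the check that $h_1$ can be chosen $\sigma$-anti-invariant so that it lies in $V$.
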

\begin{proof}
Recall that $\Theta$ has been defined in Section \ref{sec:oddhyperell} as the intersection of the $2g$ sections of the map $\psi: \cO_{\bP(V)}(-2) \to H^1(C, \bC)$. Let $C(\Theta)$ be  the cone associated with $\Theta$ in the affine space $V$.
The tangent space to $f$ at $C(\Theta)$ is described by the second equation of \eqref{eqn:sistema}. Proposition \ref{prop:lambdasol} shows that $f_1$ is equal to $f$ up to a constant, hence in $\bP(V)$ this tangent space is zero. As a consequence the corresponding point of $\Theta$ are smooth.
\end{proof}

\begin{remark}
We can interpret this result also in terms of the map $\Psi: V \to H^1(C,\bC)$ of Section \ref{sec:oddhyperell}. We have that $\ker \Psi$ is one dimensional, generated by $f$.
\end{remark}

We conclude this section by computing the number of hyperelliptic odd coverings of minimal degree when $C$ is general.

\begin{proof}[Proof of Theorem \ref{thm:number}]
We know by Remark \ref{rmk:quadrics} that all the points in $\cH_C(F)$ belongs to $\Theta=\bigcap_{i=1}^{2g} \cQ_i$. Since each point of $\Theta$ is smooth by Proposition \ref{prop:thetasmooth}, the quadrics must intersect transversally and hence $\Theta$ consists of $2^{2g}$ points. 
Now from Remark \ref{rem:numberchoices} we get
$$\deg \Phi={{3g} \choose g-1} 2^{2g}.$$
\end{proof}

\section{Odd spin elliptic curves in degree four} \label{sec:elliptic}
We want to apply the techniques developed in the previous sections to study the case of $H:E \to \bP^1$ of degree $4$ and odd spin, with $E$ being a complex elliptic curve. In this more simplified case, we will able to argue not only about the existence of solution of \eqref{main_equation} but to compute also their number directly from the structure of $\Theta$ described in Section \ref{sec:oddhyperell}. This has already been addressed with different techniques in \cite[Section 4.1]{FMNP}.

The divisor $F$ defined in Section \ref{sec:oddhyperell} is now $0$, so $D=P_1+P_2+P_3+P_4$, where the points $P_i$ are invariant with respect to a fixed involution on $E$. 
The curve $E$ can be seen as a quotient of $\bC$ by a suitable lattice, and the generator of $H^0(E,\omega_E)$ induced by this quotient will be denoted by $\dd z$. Equation \eqref{main_equation} becomes
$$\dd h=f^2 \dd z.$$

We can apply verbatim the previous approach. In particular, we can construct the space $V$ defined in Section \ref{sec:oddhyperell}, which in this case has dimension $3$. There is a isomorphism $\gamma: V \to L$ , where $L \subset \bC^{2g+2}$ given by $\sum_{i=1}^{2g+2}x_i=0$ has been defined in Lemma \ref{lemma:descriptionW}.  
There are two conics $\cQ_1$, $\cQ_2$ in $\bP(V) \cong \bP(L)$ such that $[f]\in \cQ_1 \cap \cQ_2$ if and only if $f$ is a solution of \eqref{main_equation}. 
We assume that $\cQ_1$ is given by the equation of Corollary \ref{cor:quadrica}, and in particular is smooth. We will show that $\cQ_1$ and $\cQ_2$ meet transversally, that is $\#\Theta=4$, where $\Theta:= \cQ_1 \cap \cQ_2$.

\begin{theorem}
Consider the elliptic curve $E$ as before. The locus $\Theta$ consists of $4$ distinct points interchanged by the action of the points of order $2$ of $E$.
\end{theorem}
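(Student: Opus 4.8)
The plan is to realize $\Theta$ as the intersection of two conics in $\bP^2=\bP(L)$ and to exploit the action of the $2$-torsion group $E[2]\cong(\bZ/2\bZ)^2$ to pin the intersection down completely. First I would record the easy upper bound. Since $\cQ_1$ (the conic of Corollary \ref{cor:quadrica}) is smooth, hence irreducible, and since the solution set is finite by Corollary \ref{cor:finitelymany} — so that $\cQ_2\neq\cQ_1$ and the two conics share no component — B\'ezout's theorem gives $\#\Theta\le 4$, the total intersection multiplicity being $4$. Everything then reduces to producing four genuine points and forcing transversality.

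To that end I would introduce the $E[2]$-action. For a $2$-torsion point $t$, the translation $\tau_t\colon z\mapsto z+t$ satisfies $2t=0$, so $\tau_t$ commutes with $\sigma\colon z\mapsto -z$ and merely permutes the Weierstrass points $P_1,\dots,P_4$; since $\omega=\dd z$ is translation invariant, $f\mapsto f\circ\tau_t$ preserves $V$ and sends solutions of \eqref{main_equation} to solutions, because $\dd h=f^2\omega$ forces $\dd(h\circ\tau_t)=(f\circ\tau_t)^2\omega$. Hence $E[2]$ acts on $\Theta$, and reading this off through the isomorphism $\gamma$ of Lemma \ref{lemma:descriptionW}, the three nontrivial elements act on $L\subset\bC^4$ as the three double transpositions $(12)(34)$, $(13)(24)$, $(14)(23)$, that is, as the Klein four-group inside $S_4$.

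The heart of the matter — and the step I expect to be the main obstacle — is to prove that this action is \emph{free} on $\Theta$. Suppose $[f]\in\Theta$ is fixed by some nontrivial $\tau_t$, so $f\circ\tau_t=\lambda f$; since $\tau_t$ is an involution, $\lambda^2=1$, whence $\dd(h\circ\tau_t)=\lambda^2\,\dd h=\dd h$ and $h\circ\tau_t=h+c$ for a constant $c$. Because $h$ is odd and $\tau_t$ commutes with $\sigma$, the function $h\circ\tau_t$ is again odd, and this forces $c=0$. Thus $h$ descends to $E':=E/\langle\tau_t\rangle$, i.e. the covering factors as $E\xrightarrow{\ \phi\}E'\xrightarrow{\ \bar H\ }\bP^1$ with $\phi$ an \'etale degree-$2$ isogeny and $\bar H$ of degree $2$. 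But a degree-$2$ map $\bar H\colon E'\to\bP^1$ has order-$2$ (even) ramification, which pulls back along the \'etale $\phi$ to even ramification of $H$, contradicting the fact that $H$ is an odd covering. Hence no nontrivial $\tau_t$ fixes a point of $\Theta$, and the $E[2]$-action is free.

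Finally I would assemble the count. Two conics in $\bP^2$ always meet, so $\Theta\neq\emptyset$; a free action of the order-$4$ group $E[2]$ has only orbits of size $4$, giving $4\mid\#\Theta$ and hence $\#\Theta\ge 4$. Combined with the B\'ezout bound $\#\Theta\le 4$ this yields $\#\Theta=4$, a single free $E[2]$-orbit. As the total intersection multiplicity $4$ is now spread over $4$ distinct points, each multiplicity equals $1$, so $\cQ_1$ and $\cQ_2$ meet transversally. This proves that $\Theta$ consists of four distinct points, simply transitively permuted by $E[2]$ — exactly the assertion that they are interchanged by the points of order two of $E$.
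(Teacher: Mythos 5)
Your proof follows the same skeleton as the paper's (the action of $E[2]\cong(\bZ/2\bZ)^2$ on $\bP(L)$, an upper bound of $4$, absence of fixed points on $\Theta$, hence a single free orbit), but the key step — freeness — is established by a genuinely different argument. The paper works in the explicit residue coordinates: it exhibits the fixed points $T_1=[(1,-1,i,-i)]$ and $T_2=[(1,-1,-i,i)]$ of the involution induced by $\tau$ on $\cQ_1$, and shows they cannot both lie in $\gamma(\Theta)$, since two solutions $f_1,f_2$ with those residues would satisfy $\dd h=f_1^2\dd z=f_2^2\dd z$ for the \emph{same} $h$ (using Lemma \ref{lemma:quadric} and the injectivity of $\gamma$ from Lemma \ref{lemma:descriptionW}), forcing $f_1=\pm f_2$, a contradiction; this one computation yields simultaneously that $\Theta\neq\cQ_1$ (hence is finite) and that no point of $\Theta$ is fixed by $K$. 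Your argument is instead geometric: a projectively fixed $[f]$ gives $f\circ\tau_t=\pm f$, hence $h\circ\tau_t=h$ after the (correct) normalization killing the constant via oddness, so $H$ factors through the degree-$2$ \'etale quotient $E\to E/\langle\tau_t\rangle$, which forces ramification of even index and contradicts oddness. This is correct, coordinate-free, and arguably more conceptual; it is the kind of argument that would generalize beyond the elliptic case.

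There is, however, one genuine gap: your upper bound $\#\Theta\le 4$ invokes Corollary \ref{cor:finitelymany} to guarantee that $\cQ_1\not\subseteq\cQ_2$. That corollary asserts only \emph{generic} finiteness of $\Phi$, i.e.\ it applies to a general curve in moduli, whereas the theorem (and the paper's proof) concerns an arbitrary elliptic curve $E$; as written, your argument proves the statement only for $E$ general. Fortunately, your own freeness argument closes the gap without any genericity: if $\Theta=\cQ_1$, note that each nontrivial element of $E[2]$ acts on the smooth conic $\cQ_1\cong\bP^1$ as a nontrivial involution (its fixed locus in $\bP(L)\cong\bP^2$ is a point plus a line, which cannot contain an irreducible conic), hence has two fixed points on $\cQ_1$; these would then be points of $\Theta$ fixed by the action, contradicting freeness. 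Therefore $\Theta\subsetneq\cQ_1$ is finite, B\'ezout gives $\#\Theta\le 4$, and the rest of your count (nonemptiness, divisibility by $4$, transversality) goes through for every $E$.
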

\begin{proof} 
The translations by order $2$-points induce a natural action of $K:=\bZ/2\bZ\times \bZ/2\bZ$ on all the spaces defined before.
Indeed, if for instance $\tau:=P_2-P_1$, $\eta =P_3-P_1$ and $\zeta=P_4-P_1$, we have that the divisor $D$ is $\tau$-invariant:  
 $$\tau(P_1)=P_2; \ \tau(P_2)=P_1;\ \tau(P_3)=P_4; \ \tau(P_4)=P_3.$$
 The action on the residues of $f\in V$ is given by
$$(a,b,c,d)^\tau=(b,a,d,c).$$
The other points of order $2$, act similarly. 
It follows that the action of $K$ on $L$ is induced by the standard representation. Also the locus $\Theta$ and $\cQ_1$ are invariant under the action of $K$. 
There are six points of $\cQ_1$ which have a $K$-orbit of cardinality smaller than $4$, corresponding to the fixed points of the involutions induced by $\tau$, $\eta$ and $\zeta$ on $\cQ_1$: for instance we have $T_1:=[v_1]$ and $T_2:=[v_2]$, with $v_1:=(1,-1,i,-i)$ and $v_2:=(1,-1,-i,i)$ are fixed by $\tau$.

We now show that $\Theta=\psi^{-1}(0)$ is zero-dimensional. Note that $\Theta\subset \cQ_1$ where $\cQ_1$ is irreducible. Then either $\Theta$ has dimension $0$ or $\Theta=\cQ_1$. To see that $\Theta\neq \cQ_1$ it is enough to prove that either $T_1$ or $T_2$ are not in $\gamma(\Theta)$. Assume by contradiction this is not the case. Let $f_1,$ $f_2$ be solutions of \eqref{main_equation} such that
$\gamma(f_i)=v_i$ for $i=1,2$. 
We would have $dh=f_1^2dz=f_2^2dz$, where $\gamma(h)=(1,1,-1,-1)$. This implies either $f_1=f_2$ or $f_1=-f_2$. Since $v_1\neq v_2$ and $v_1\neq -v_2$ we obtain a contradiction. 
We have proven so far that $\Theta$ is zero-dimensional. Therefore the cardinality of $\Theta$ is smaller than or equal to $4$. 

 Now we will prove that neither $T_1$ nor $T_2$ are in $\gamma(\Theta)$. 
We have that $\eta(T_1)=T_2$. It follows that $T_1\in \gamma(\Theta)$ if and only if $T_2\in \gamma(\Theta)$, this is not possible. 
The same holds for the fixed points of $\eta$ and $\zeta$. It follows that $\Theta$ must be a full orbit of $K$ of cardinality $4$.
\end{proof}

\begin{remark}
The fact that of $\Theta$ is finite can be seen as well from the theory of Hurwitz spaces. It has been proved by Fried in \cite{MR2735035} that there are only two connected components of $\cH=\cH^{\text{odd}}\sqcup \cH^{\text{even}},$ corresponding to the even and the odd spin structures. The coverings constructed in the previous section belong to $\cH^{\text{odd}}$ and are monodromy-invariant. It follows from the irreducibility that all the coverings of $\cH^{\text{odd}}$ can be obtained in this way. We remark that this can be generalised to higher degrees and different ramification type by using an approach similar to \cite{MR3680993} to compute the number of irreducible components of the Hurwitz spaces. It would be interesting to generalise this more explicit computation to hyperelliptic curves, and counting the number of connected components of $\cH^{\text{HOC}}_g$.
\end{remark}

\bibliographystyle{siam}
\bibliography{bibliografia}

\end{document}